 \def\RR{{\mathbb R}}  \def\TT{{\mathbb T}}
 \def\ZZ{{\mathbb Z}}
\def\Wi{\widetilde}
   \def\cO{{\mathcal O}}
\def\cF{{\mathcal F}}
\newtheorem{theorem}{{Theorem}}[section]
\newtheorem{proposition}[theorem]{{Proposition}}
\newtheorem{lemma}[theorem]{{Lemma}}
\newtheorem{question}[theorem]{{Question}}
\theoremstyle{definition}
\theoremstyle{remark}
\newtheorem{remark}[theorem]{{Remark}}
\title[Self orbit equivalences on a class of Anosov flows]{Self orbit equivalences on a class of Anosov flows}
\author{Bin Yu}
\date{\today}
\begin{document}
\maketitle

\begin{abstract}
Let $M_k$ ($k\in \ZZ$ and $|k|>4$) be the $3$-manifold obtained by doing $k$ Dehn surgery on the figure-eight knot, and $X_t$ be the canonical Anosov flow on $M_k$
that is constructed by Goodman. The main result of this article is that
if $|k|\gg 0$, then $\mbox{Mod} (M_k) \cong \ZZ_2\oplus \ZZ_2$ and every element of the mapping class group of $M_k$ can be represented by a self orbit equivalence  of  $X_t$.
\end{abstract}

\section{Introduction}\label{s.Int}
Let $Y_t$ be the suspension Anosov flow induced by the vector field
$(0,\frac{\partial}{\partial t})$ on the sol-manifold $W:=\TT^2 \times [0,1]/(x,1)\sim (A(x),0)$, where $A=\left(\begin{array}{cc}
                                         2 & 1 \\
                                         1 & 1 \\
                                       \end{array}
                                     \right)$ is an Anosov automorphism on $\TT^2$.
Define an orientation on $W$.
The first class of 
Anosov flows on hyperbolic 3-manifolds was constructed by Goodman \cite{Go}
by doing
a type of dynamical Dehn surgery, namely Dehn-Fried-Goodman surgery (abbreviated as DFG surgery), along a periodic orbit of $Y_t$. For the detail about DFG surgery, we refer to Goodman \cite{Go}, Fried \cite{Fr} and Shannon \cite{Sha}. 
Now let us explain Goodman’s examples more clearly. Let $\omega$ be the periodic
orbit of $Y_t$ associated to the origin $O$ that is  the fixed point of $A$. 
Then after doing a $k$-DFG ($k\in \ZZ, |k|>4$)\footnote{Following Thurston \cite{Thu}, the condition $|k|>4$ is to ensure that $M_k$ is a hyperbolic $3$-manifold.} surgery on $Y_t$ at $\omega$,
we get a new Anosov flow $X_t$ on the new three manifold $M_k$. Here $M_k$ is the hyperbolic manifold that can be obtained from $W$ by doing $k$-DFG surgery at $\omega$. 

The Anosov flow $X_t$ on $M_k$ shares some impressive properties, for instance:
\begin{enumerate}
\item by Fenley \cite{Fen}, $X_t$ is skew $\RR$-covered, which is an interesting
topological property of the orbit space;
\item by Fenley \cite{Fen} and Barthelme-Fenley \cite{BF0}, each periodic orbit of 
$X_t$ is isotopic
to infinitely many periodic orbits of $X_t$;
\item by \cite{Yu} due to the author of this paper, up to orbital equivalence,
$X_t$ is the unique Anosov flow on $M_k$.
\end{enumerate}

This paper is devoted to understand the self orbit equivalences of $X_t$, in particular
their relations with the mapping class group of $M_k$. 
To understand the self orbit equivalences of Anosov flows is not only an important topic in the study of Anosov flows (\cite{BG}, \cite{BM}), but also plays an important role in the study of partially hyperbolic diffeomorphisms in 3-manifolds,
see for instance, \cite{BFP}, \cite{FP1}, \cite{FP2}.

The main result of this paper is the following:

\begin{theorem}\label{t.main}
If $|k|\gg 0$, then $\mbox{Mod} (M_k) \cong \ZZ_2\oplus\ZZ_2$ and every element of the mapping class group of $M_k$ can be represented by a self orbit equivalence  of the Anosov flow $X_t$ on $M_k$.\footnote{Let $M$ be an orientable manifold, in this paper, we use $\mbox{Mod} (M)$ to represent the mapping class group of $M$ and $\mbox{Mod}^+(M)$ to represent the subgroup of $\mbox{Mod} (M)$ that consists of the orientation preserving elements of $\mbox{Mod}(M)$.}
\end{theorem}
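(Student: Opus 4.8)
The plan is to compute $\mathrm{Mod}(M_k)$ via Mostow rigidity and then to realize each of its elements by a symmetry of the pair $(W,\omega)$ that is compatible with the Dehn--Fried--Goodman surgery and with the flow. Since $M_k$ is closed and hyperbolic for $|k|>4$, Mostow rigidity together with the identification of the mapping class group of an aspherical hyperbolic $3$--manifold with its outer automorphism group gives $\mathrm{Mod}(M_k)\cong\mathrm{Out}(\pi_1(M_k))\cong\mathrm{Isom}(M_k)$, so it suffices to determine the isometry group. First I would use hyperbolic Dehn filling: as $|k|\to\infty$ the core $\gamma$ of the surgery solid torus becomes the unique shortest closed geodesic of $M_k$, so for $|k|\gg 0$ every isometry preserves $\gamma$ setwise and hence restricts to a self-homeomorphism of $N:=M_k\setminus\gamma$; conversely a symmetry of $N$ extends over the filling precisely when it fixes the surgery slope $s_k$ as an unoriented curve on $\partial N$. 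Recognizing $N=W\setminus\omega$ as the figure-eight knot complement, this reduces everything to computing $\mathrm{Isom}(N)$ together with the subgroup fixing $s_k$.

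Next I would compute that subgroup from the fibered structure $N\to S^1$ with once-punctured-torus fiber and monodromy $A$. The symmetries of $N$ are governed by the $g\in GL_2(\ZZ)$ with $gAg^{-1}=A^{\pm1}$, acting on $H_1(\partial N)=\langle\mu,\lambda\rangle$, where $\mu$ is the meridian of $\omega$ and $\lambda$ the flow/longitude direction, and one finds $s_k=k\mu+\lambda$. Two symmetries are decisive: the fiberwise involution $\sigma\colon x\mapsto -x$, which commutes with $A$, fixes $\omega$ pointwise, preserves the flow and the orientation of $W$, and acts as $+I$ on $H_1(\partial N)$; and a flow-reversing symmetry $\Theta$ built from a matrix $P\in GL_2(\ZZ)$ with $PAP^{-1}=A^{-1}$, $\det P=-1$, $P^2=I$ (for instance $P=\left(\begin{smallmatrix}1&0\\-1&-1\end{smallmatrix}\right)$), which reverses both $\mu$ and $\lambda$ and so acts as $-I$. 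Acting as $\pm I$, both $\sigma$ and $\Theta$ fix every slope, in particular $s_k$. I would then check that the remaining symmetries of $N$ act on the cusp by $\mathrm{diag}(\pm 1,\mp 1)$ and therefore fix $s_k$ only when $k=0$. This identifies the slope-preserving subgroup with $\langle\sigma,\Theta\rangle\cong\ZZ_2\oplus\ZZ_2$ for all $k\neq 0$, giving $\mathrm{Mod}(M_k)\cong\ZZ_2\oplus\ZZ_2$; as a by-product all four classes are orientation preserving, so $M_k$ is chiral and $\mathrm{Mod}^+(M_k)=\mathrm{Mod}(M_k)$.

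For the second assertion I would realize $\sigma$, $\Theta$, and $\sigma\Theta$ as self orbit equivalences of $X_t$. Each is first realized on $(W,Y_t)$: $\sigma$ is orientation preserving and flow preserving, while $\Theta$ and $\sigma\Theta$ are orientation preserving and flow reversing, and all three fix $\omega$. Because these maps are built from $-I$, $A$ and the involution $P$ conjugating $A$ to $A^{-1}$, they respect the eigendirections of $A$ and hence the weak stable and unstable foliations of $Y_t$ near $\omega$; I would then verify that the DFG surgery can be performed $\langle\sigma,\Theta\rangle$--equivariantly, so that these maps descend to homeomorphisms of $M_k$ carrying orbits of $X_t$ to orbits of $X_t$, realizing all of $\mathrm{Mod}(M_k)$. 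Alternatively, surjectivity onto $\mathrm{Mod}(M_k)$ can be obtained at one stroke from the orbital rigidity recalled in item~(3) of the introduction: for any diffeomorphism $f$ the pushforward $f_*X_t$ is Anosov, hence orbit equivalent to $X_t$, and if that equivalence can be taken isotopic to the identity then $f$ is isotopic to a self orbit equivalence.

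The main obstacle is the geometric reduction of the first paragraph together with the equivariance check of the third. Showing that for $|k|\gg 0$ the isometry group is \emph{exactly} the slope-preserving subgroup of $\mathrm{Isom}(N)$ --- equivalently that no unexpected isometries appear and that the core is forced to be the unique shortest geodesic --- requires the quantitative geometry of hyperbolic Dehn filling (length comparisons under drilling and filling and the control of short geodesics), and this is precisely where $|k|\gg 0$ rather than $|k|>4$ is needed. The second delicate point is dynamical: one must arrange the surgery torus, the regluing twist, and the inserted model flow to be invariant under $\sigma$ and $\Theta$, so that in particular flow reversal is compatible with the orientation of the DFG twist; once this compatibility is secured, the descent to genuine self orbit equivalences of $X_t$ is automatic.
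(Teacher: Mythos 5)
Your computation of $\mbox{Mod}(M_k)$ in the first two paragraphs is sound in outline and is a genuinely different route from the paper's: you identify $\mbox{Mod}(M_k)$ with the isometry group via Mostow rigidity and then with the slope-preserving subgroup of the symmetry group of the figure-eight knot exterior, so non-triviality of the four classes comes for free, whereas the paper argues combinatorially (its Lemmas \ref{l.fix}, \ref{l.orpr}, \ref{l.1}, \ref{1.extiso}) and must prove non-triviality of the class of $f_2$ \emph{dynamically}, via the Barthelme--Gogolev theorem (Theorem \ref{t.BG}). Both routes rest on the same input, namely that for $|k|\gg 0$ the surgery core $\omega$ is the unique shortest geodesic of $M_k$.

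The genuine gap is in your third paragraph, and it concerns exactly the assertion that makes the theorem an answer to the Barthelme--Mann question. The classes you call $\Theta$ and $\sigma\Theta$ (the paper's $g_1,g_3$) reverse the flow: they conjugate $Y_t$ to $Y_{-t}$. No matter how equivariantly the DFG surgery is performed, the homeomorphisms of $M_k$ they induce carry oriented orbits of $X_t$ to orbits of $X_t$ \emph{with reversed orientation}; that is, they are orbit equivalences between $X_t$ and $X_{-t}$ (this is precisely item (2) of the paper's Lemma \ref{l.fi}), not self orbit equivalences of $X_t$, which by definition must preserve the orientation of every orbit. So your claim that ``the descent to genuine self orbit equivalences of $X_t$ is automatic'' once equivariance is secured is false: your construction realizes only $[\mathrm{id}]$ and $[\sigma]$, half of $\ZZ_2\oplus\ZZ_2$, by self orbit equivalences. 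The missing idea --- the heart of the paper's proof --- is that $X_t$ is skew $\RR$-covered (Fenley \cite{Fen}), hence carries the Barbot--Fenley half-step-up map $\eta$: an orbit equivalence from $X_t$ to $X_{-t}$ which is homotopic to the identity, and therefore isotopic to it by Gabai--Meyerhoff--N.~Thurston \cite{GMT} because $M_k$ is hyperbolic (Remark \ref{r.etaid}). Composing, $\eta\circ f_1$ and $\eta\circ f_3$ are honest self orbit equivalences of $X_t$ lying in the isotopy classes of $f_1$ and $f_3$ (Lemma \ref{l.2}); without $\eta$ or a substitute, the two flow-reversing classes are simply not realized.

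Your fallback argument via the uniqueness of the Anosov flow on $M_k$ \cite{Yu} does not repair this. Uniqueness gives, for any diffeomorphism $f$, an orbit equivalence $g$ between $f_*X_t$ and $X_t$, so that $g\circ f$ is a self orbit equivalence; but it gives no control whatsoever on the isotopy class of $g$, and whether $g$ can be taken isotopic to the identity is equivalent to the very statement being proved. Your own parenthetical ``if that equivalence can be taken isotopic to the identity'' concedes the point: the argument is circular, and discharging that hypothesis again requires the skew $\RR$-covered machinery ($\eta$, Proposition \ref{p.eta}, Theorem \ref{t.BG}) that your proposal omits.
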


This theorem gives a positive answer to the following question asked by Barthelme and Mann (\cite{BM}).

\begin{question}\label{q.BM}[Question $2$ of \cite{BM}]
Does there exist an ($\RR$-covered or not) Anosov flow on a hyperbolic 3-manifold
$M$ such that every element of the mapping class group of $M$ is represented by a self orbit equivalence?
\end{question}

Certainly, on this topic, it is still interesting to look for an Anosov flow on a hyperbolic $3$-manifold 
such that there exists an element of the mapping class group of the $3$-manifold
that can not be represented by any self orbit equivalence.

Now we sketch the proof of Theorem \ref{t.main}.
Firstly observe that  when $|k|\gg 0$, every homeomorphism $f$ on $M_k$ must be orientation preserving (Lemma \ref{l.orpr}),
and up to isotopy, $f(\omega)=\omega$ (Lemma \ref{l.fix}). Then up to isotopy, we can suppose $f(N)=N$ and $f(V)=V$ where $V$ is a solid torus neighborhood of
$\omega$ and $N$ is the closure of $M_k\setminus V$ that is homeomorphic to
the figure-eight knot exterior. It is a classical result that $\mbox{Mod} (N)\cong D_4$
where $D_4$ is the dihedral group.
We choose four linear automorphism $g_0= id_W$, $g_1$, $g_2$ and $g_3$ on 
$W$ such that they generate a $\ZZ_2 \oplus \ZZ_2$ subgroup of $\mbox{Mod}^+ (W)$ (Lemma \ref{l.gZ4}). Here each of $g_0$ and $g_2$ is a self orbit equivalence of $Y_t$, and each of $g_1$ and $g_3$ is an orbit equivalence between $Y_t$ and $Y_{-t}$. By using an idea of Fried, $g_i$ can induce
a `good' homeomorphism $h_i$ on the figure-eight knot exterior $N$. Then each $h_i$ induces two
isotopic homeomorphisms $f_i$ and $f_i'$ on $M_k$.
$f_i'$ is defined by a natural extension of $h_i$ on $M_k = N\cup V$. 
$f_i$ is induced by $h_i$ by collapsing $\partial N$ to $\omega$ in $M_k$. Notice that
in this procedure, $(W, \omega, Y_t)$ is transformed to $(M_k, \omega, X_t)$ under $k$-DFG surgery at $\omega$. Due to the constructions of  $g_i$ and $h_i$, we have that
$f_i$ ($i=0,2$)  is a self orbit equivalence of $X_t$, and $f_i$ (i=1,3) is an orbit equivalence between $X_t$ and $X_{-t}$. Now the left part of the proof of Theorem \ref{t.main} consists of the following three points.
\begin{enumerate}
\item Every homeomorphism $f$ on $M_k$ is isotopic to some $f_i$ (Lemma \ref{l.1}). This is based on the
two observations in the first part of this paragraph and some combinatorial topology discussions. 
\item For every $i=0, 1,2,3$, there exists a self orbit equivalence $F_i$
of $X_t$  such that $F_i$ is isotopic to $f_i'$ (Lemma \ref{l.2}). Here $F_i = f_i$ if $i=0,2$ and 
$F_i=\eta \circ f_i$ if $i=1,3$, where $\eta$ is a special orbit equivalence between
$X_t$ and $X_{-t}$ that is isotopic to $id$ on $M_k$. $\eta$ is defined by Fenley
\cite{Fen}
and Barbot \cite{Ba} in their orbit space theory for three dimensional Anosov flows, see Section \ref{s.Skew}.
\item Every $f_i$ (i=1,2,3) is not isotopic to $id$ on $M_k$. The point here is to prove that $f_2$ is not isotopic to $id$ on $M_k$ (Lemma \ref{l.3}), where we essentially use 
a result (Theorem \ref{t.BG}) due to Barthelme and Gogolev \cite{BG}. 
\end{enumerate}

\section*{Acknowledgments}
We would like to thank Yi Shi and Youlin Li, In particular, we thank Youlin Li for his very valuable suggestions about the proof of Lemma \ref{l.ale}.
The author is supported by Shanghai Pilot Program for Basic Research, National Program for Support of Top-notch Young Professionals and the Fundamental Research Funds for the Central Universities.

\section{Skew $\RR$-covered Anosov flows}\label{s.Skew}
We assume the reader know the basic facts about $3$-dimensional
Anosov flows. More related information we refer to \cite{Fen} and \cite{Bart}.
In this section, we will briefly recall some facts of skew $\RR$-covered Anosov flows
that will be used in the proof. 
 Most of the materials mentioned in this section basically are due to Barbot \cite{Ba} and Fenley \cite{Fen}. The brief introduction here mainly  borrows from Barthelme and Mann \cite{BM}. More details  we refer to   the nice survey due to Barthelme \cite{Bart}.

Let $\phi_t$ be an Anosov flow on a closed orientable $3$-manifold $M$ with coorientable stable foliation $\cF^s$ and $\cF^u$, and let $\Wi M$ be the universal cover of $M$. $\phi_t$  is called $\RR$-covered
if the leaf space of  its stable foliation\footnote{I.e. the leaf space of the lifting foliation of the stable foliation on $\Wi M$.} is homeomorphic to $\RR$.
\footnote{By Barbot \cite{Ba} or Fenley \cite{Fen}, it is equivalent that the leaf space of the unstable foliation is homeomorphic to $\RR$.} By  Barbot \cite{Ba} and Fenley \cite{Fen}, an $\RR$-covered Anosov flow on a closed orientable $3$-manifold is
either orbitally equivalent to the suspension of an Anosov automorphism on $\TT^2$ or \emph{skew}.   Here we say that $\phi_t$ is a skew $\RR$-covered Anosov flow
if the orbit space of the lift of the flow to $\Wi M$ is homeomorphic to
the infinite diagonal strip
$$\cO = \{(x,y)\in \RR^2 \mid |x-y|<1\}$$
via a homeomorphism taking the stable leaves of the flows to the horizontal cross sections of the strip, and unstable leaves to the vertical cross sections. See Figure \ref{f.skewR}.  

\begin{figure}[htp]
\begin{center}
  \includegraphics[totalheight=6.6cm]{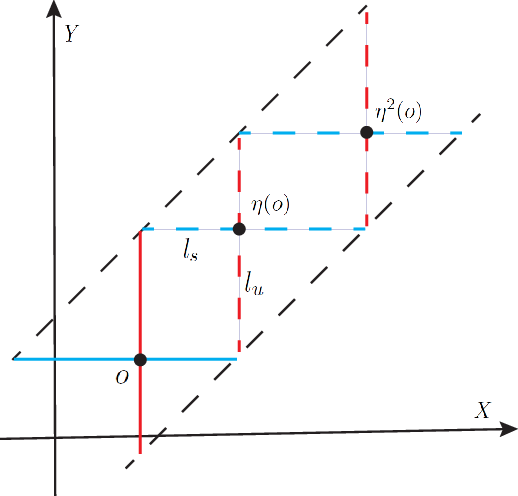}\\
  \caption{The skew $\RR$-covered orbit space $\cO$ and the map $\eta$}\label{f.skewR}
\end{center}
\end{figure}

There is a very special  continuous fixed point free map $\eta:\cO \to \cO$,
which is called the \emph{half-step up} map by Barthelme and Mann \cite{BM}. $\eta$
is defined as follows.
Take a point $o\in \cO$, let $l_s$ be a stable leaf that is the upper boundary of
the strip consisting of the unstable leaves through $o$, and $l_u$ be a unstable leaf that is the upper boundary of
the strip consisting of the stable leaves through $o$. We define by $\eta (o)$ the intersection point of $l_s$ and $l_u$. This map exchanges stable leaves and unstable leaves, so $\tau=\eta^2$ preserves the stable leaves (resp. unstable leaves).
$\eta$ can induce an orbit equivalence between $\phi_t$ and $\phi_{-t}$ that is homotopic to $id$ on $M$. For simplicity, we  still call the orbit equivalence by $\eta$.  Then $\eta^2$ is a self-orbit equivalence of $\phi_t$ that is homotopic to $id$ on $M$. 
\begin{remark}\label{r.etaid}
If $M$ is a closed hyperbolic $3$-manifold, then each of $\eta$ and $\eta^2$ is isotopic to $id$ on $M$. This is a direct consequence of a classical result due to Gabai, Meyerhoff and N. Thurston \cite{GMT}, which says that when $M$ and $N$ are two closed hyperbolic $3$-manifolds and $f,g:M\to N$ are two homotopic homeomorphisms, then $f$ is isotopic to $g$. 
\end{remark}

Moreover, $\eta$ is important to describe the homotopy class of the periodic
orbits of $\phi_t$:

\begin{proposition}[Barbot \cite{Ba}, Fenley \cite{Fen}]\label{p.eta}
Let $\phi_t$ be a skew $\RR$-covered Anosov flow on a closed orientable $3$-manifold,
and $\alpha$ be a periodic orbit of $\phi_t$. Then all the orbits freely homotopic to a 
periodic orbit $\alpha$  
 are given by $\{\eta^{2k} (\alpha), k\in \ZZ\}$,
and 
all the orbits freely homotopic to the inverse of $\alpha$ are given by  
$\{\eta^{2k+1} (\alpha), k\in \ZZ\}$.
Moreover, if $M$ is a hyperbolic $3$-manifold, every two periodic orbits 
$\eta^{2k_1} (\alpha)$ and $\eta^{2k_2}(\alpha)$ ($k_1\neq k_2$) are different, and therefore there are infinitely many periodic orbits that are homotopic to $\alpha$.
\end{proposition}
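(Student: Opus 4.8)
The plan is to move everything into the orbit space $\cO$ and to exploit that $\eta$ is a canonical object, hence $\pi_1(M)$-equivariant. I would first fix the standard dictionary: a primitive periodic orbit $\alpha$ corresponds to a point $p\in\cO$ fixed by a primitive deck transformation $g=g_\alpha\in\pi_1(M)$, with the forward flow orientation selecting $g$ rather than $g^{-1}$; conversely every point of $\cO$ fixed by a nontrivial element of $\pi_1(M)$ projects to a periodic orbit, and two oriented periodic orbits are freely homotopic iff their associated primitive elements are conjugate in $\pi_1(M)$. The key preliminary observation is that $\eta$ commutes with the $\pi_1(M)$-action: since $M$ is orientable and $\cF^{s},\cF^{u}$ are coorientable, every deck transformation preserves the two foliations together with their transverse orientations, hence preserves the "upper boundary" construction defining $\eta$, so $g\circ\eta=\eta\circ g$ for all $g\in\pi_1(M)$.

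From equivariance the forward inclusion is immediate: if $g(p)=p$ then $g(\eta^{n}(p))=\eta^{n}(g(p))=\eta^{n}(p)$, so every point of the chain $\{\eta^{n}(p):n\in\ZZ\}$ is fixed by $g$ and projects to a periodic orbit $\eta^{n}(\alpha)$. To sort these by orientation I would use that $\eta$ realizes an orbit equivalence between $\phi_t$ and $\phi_{-t}$ homotopic to $\mathrm{id}_M$: as an oriented curve $\eta$ sends $(\alpha,+)$ to $(\eta(\alpha),-)$, and homotopy to the identity then gives $(\eta(\alpha),+)\simeq(\alpha,+)^{-1}$. Iterating, $\eta^{2k}(\alpha)$ is freely homotopic to $\alpha$ and $\eta^{2k+1}(\alpha)$ to $\alpha^{-1}$.

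The substantive content is the reverse inclusion, namely that $\mathrm{Fix}(g)\subset\cO$ is exactly the chain $\{\eta^{n}(p)\}$, so that no orbit freely homotopic to $\alpha$ or to $\alpha^{-1}$ escapes the list: if $\beta$ is freely homotopic to $\alpha$ then $g_\beta=hgh^{-1}$, and the point $h^{-1}(\text{lift of }\beta)$ lies in $\mathrm{Fix}(g)$ and projects to $\beta$, so completeness of the list is equivalent to $\mathrm{Fix}(g)=\{\eta^{n}(p)\}$. This is where I expect the main obstacle, and the one place a genuine appeal to the skew orbit-space theory of Barbot and Fenley seems unavoidable: an element fixing two distinct points of $\cO$ forces them to be corners of a chain of lozenges whose corners are all fixed by $g$, and in the skew $\RR$-covered case this chain is globally a single $\eta$-orbit. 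In the strip model $\eta(x,y)=(y+1,x+1)$, with $\tau=\eta^{2}$ the diagonal translation by $(2,2)$, one analyzes the $g$-invariant families of fixed stable and unstable leaves and checks that the only intersection points available are the $\eta^{n}(p)$; I would flag the ruling out of additional fixed leaves (via the non-separated-leaf/lozenge structure) as the hard step.

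Finally, for the hyperbolic case I would derive distinctness of the $\eta^{2k}(\alpha)$ purely from equivariance and the algebra of $\pi_1(M)$. If $\eta^{2k_1}(\alpha)=\eta^{2k_2}(\alpha)$ in $M$ with $k_1\neq k_2$, then commuting $\eta$ past the deck group yields $h(p)=\eta^{2m}(p)$ for some $h\in\pi_1(M)$ and $m=k_2-k_1\neq0$; since $\eta^{2m}(p)\neq p$ we get $h\notin\langle g\rangle$. The stabilizer of $\eta^{2m}(p)$ is the maximal cyclic group $\langle g\rangle$ (primitivity, and atoroidality forcing cyclic stabilizers), so $hgh^{-1}=g^{\pm1}$. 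The case $hgh^{-1}=g$ makes $h$ centralize $g$, hence $h\in\langle g\rangle$ and $h(p)=p$, a contradiction; the case $hgh^{-1}=g^{-1}$ makes $\langle g,h^{2}\rangle$ abelian, which is either a $\ZZ\oplus\ZZ$ subgroup—impossible by atoroidality—or forces $h^{2}\in\langle g\rangle$, whence $g^{2j}=1$ gives $h^{2}=1$, impossible in the torsion-free group $\pi_1(M)$. Thus the $\eta^{2k}(\alpha)$ are pairwise distinct, giving infinitely many periodic orbits freely homotopic to $\alpha$. The only serious difficulty is the completeness statement $\mathrm{Fix}(g)=\{\eta^{n}(p)\}$; everything else is equivariance of $\eta$ together with standard hyperbolic $3$-manifold group theory.
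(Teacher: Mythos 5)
The paper offers no proof of this proposition at all: it is imported verbatim from Barbot \cite{Ba} and Fenley \cite{Fen}, so there is no internal argument to compare yours against, and the fair comparison is with that citation. Your reconstruction is correct in every part that can be checked directly: the $\pi_1(M)$-equivariance of $\eta$ (which does follow from coorientability of the foliations, since the lifted coorientations are automatically deck-invariant and $\eta$ is built from them), the orientation bookkeeping giving $\eta^{2k}(\alpha)\simeq\alpha$ and $\eta^{2k+1}(\alpha)\simeq\alpha^{-1}$ via the homotopy of $\eta$ to $\mathrm{id}_M$, and the hyperbolic-case distinctness argument (equivariance reduces it to $h(p)=\eta^{2m}(p)$, then cyclic stabilizers, absence of $\ZZ\oplus\ZZ$ subgroups, and torsion-freeness of $\pi_1(M)$ yield a contradiction) --- this last piece is a complete, self-contained proof, and note it works even without assuming $\alpha$ primitive, since by equivariance $\mathrm{Stab}(p)=\mathrm{Stab}(\eta^{2m}(p))$ and one can run the normalizer argument on a generator of that cyclic stabilizer. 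The one step you leave unproved --- completeness of the list, i.e.\ that $\mathrm{Fix}(g)\subset\cO$ is exactly the single chain $\{\eta^{n}(p)\}$, via the lozenge/chain structure --- is precisely the core of the Barbot--Fenley theory to which the proposition is attributed; flagging it as a citation rather than proving it puts your write-up on the same footing as the paper (indeed on a slightly more detailed one), so there is no gap relative to what the paper itself does.
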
 

Notice that in this case, we can replace free homotopy by isotopy by the following theorem due to Barthelme and Fenley \cite{BF0}.
\begin{theorem}\label{t.hopiso}
Let $\phi_t$ be a skew $\RR$-covered Anosov flow on a closed orientable $3$-manifold.
If two  periodic orbits of $\phi_t$  are  freely  homotopic,  then  in  fact  they are isotopic.
\end{theorem}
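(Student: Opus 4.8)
The plan is to reduce the theorem, via Proposition~\ref{p.eta}, to a single local statement about adjacent corners of one lozenge, and then to realize the corresponding free homotopy by an \emph{embedded} annulus. Let $\alpha,\beta$ be freely homotopic periodic orbits of $\phi_t$. By Proposition~\ref{p.eta} we may write $\beta=\eta^{2k}(\alpha)$ for some $k\in\ZZ$. The induced map $\eta\colon M\to M$ is a self-homeomorphism, so if $H$ is an ambient isotopy of $M$ with $H_0=\mathrm{id}$ and $H_1(\alpha)=\eta(\alpha)$, then $\eta\circ H\circ\eta^{-1}$ is an ambient isotopy carrying $\eta(\alpha)$ to $\eta^{2}(\alpha)$; concatenating such isotopies shows that $\alpha$ is isotopic to $\eta^{2k}(\alpha)=\beta$. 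Hence it suffices to prove that $\alpha$ and $\eta(\alpha)$ are isotopic, i.e. that the orbits corresponding to two adjacent corners of a single lozenge are isotopic.

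Before the general argument, I note that one case is immediate. Recall from Section~\ref{s.Skew} that $\eta$ is homotopic to the identity on $M$. If $M$ is a closed hyperbolic $3$-manifold, Remark~\ref{r.etaid} promotes this to an \emph{isotopy} from $\mathrm{id}$ to $\eta$; dragging $\alpha$ along this isotopy shows at once that $\alpha$ and $\eta(\alpha)$ are isotopic, which already settles the situation relevant to $M_k$. For a general skew $\RR$-covered $M$ (which may be Seifert fibered or a graph manifold) one cannot upgrade homotopy of the ambient map to an isotopy, so a genuinely geometric construction is needed.

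For the general case I would work in the universal cover. Let $p\in\cO$ be the point representing $\Wi\alpha$; then $p$ and $\eta(p)$ are the two corners of a lozenge $L$, and since $\eta$ commutes with the deck transformations, the generator $g$ of the stabilizer of $\Wi\alpha$ also fixes $\eta(p)$ and translates along $\Wi{\eta(\alpha)}$. The free homotopy between $\alpha$ and $\eta(\alpha)$ lifts to a $g$-equivariant map of a strip into $\Wi M$ whose two boundary flow lines are $\Wi\alpha$ and $\Wi{\eta(\alpha)}$, so its quotient is an immersed annulus $A_0\subset M$ with $\partial A_0=\alpha\sqcup\eta(\alpha)$. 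The goal is to replace $A_0$ by an \emph{embedded} annulus with the same boundary: once such an annulus is found, it is, by the isotopy-extension theorem, exactly an ambient isotopy carrying $\alpha$ to $\eta(\alpha)$, completing the reduction of the first paragraph. To produce the embedded model I would use the global product structure of $\cO$ together with the perfect-fit description of $L$ to select, $g$-equivariantly, an embedded $g$-invariant surface in $\Wi M$ spanning the flow lines that lie between $\Wi\alpha$ and $\Wi{\eta(\alpha)}$.

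I expect the embeddedness in the last step to be the main obstacle. There is a genuine tension: the flow-saturated strip over a single embedded arc joining $p$ to $\eta(p)$ in $\overline{L}$ is manifestly embedded but is \emph{not} $g$-invariant (the generator $g$ acts hyperbolically near each corner and so moves the arc), whereas a surface assembled directly from the stable and unstable half-leaves bounding $L$ is $g$-invariant but \emph{degenerates at infinity} along the perfect fit and so fails to close up into a compact annulus. Reconciling $g$-invariance with embeddedness — interpolating between these two models while keeping all $\langle g\rangle$-translates of the chosen surface disjoint or nested — is precisely where the skew $\RR$-covered hypothesis, i.e. the control the product structure of $\cO$ gives over translates of leaves, must be used, and this is where I anticipate the essential work to lie.
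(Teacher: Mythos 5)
First, a point of context: the paper does not prove Theorem \ref{t.hopiso} at all --- it is imported verbatim from Barthelm\'e--Fenley \cite{BF0}, so there is no in-paper proof to compare against and your attempt has to stand on its own. The parts of it that stand are these: the reduction via Proposition \ref{p.eta} together with the conjugation-and-concatenation trick is correct, and your treatment of the hyperbolic case is complete --- Remark \ref{r.etaid} (i.e.\ \cite{GMT}) gives an ambient isotopy from $id$ to $\eta$, and dragging $\alpha$ along it carries $\alpha$ to $\eta(\alpha)$, hence to every $\eta^{2k}(\alpha)$. Since the only manifolds to which this paper applies the theorem are the hyperbolic $M_k$, your argument does suffice for the paper's purposes.

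It does not, however, prove the theorem as stated, which concerns \emph{every} closed orientable $3$-manifold carrying a skew $\RR$-covered Anosov flow. Your general case is a program, not a proof: by your own account the crucial step --- producing an embedded, $g$-invariant annulus spanning $\Wi\alpha$ and $\Wi{\eta(\alpha)}$, reconciling the two models you describe --- is left undone, and this is exactly the nontrivial content (it is, in spirit, what \cite{BF0} actually has to work for). Worse, the premise that pushes you into that construction is false: your claim that for Seifert fibered or graph manifolds ``one cannot upgrade homotopy of the ambient map to an isotopy'' is incorrect. A closed orientable $3$-manifold carrying an Anosov flow is irreducible with infinite fundamental group, hence aspherical, and by geometrization it is Haken, Seifert fibered, or hyperbolic; in all three cases homotopic homeomorphisms are isotopic (Waldhausen for Haken manifolds, Scott and Boileau--Otal for the Seifert fibered ones, \cite{GMT} for the hyperbolic ones). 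So the same soft argument you used in the hyperbolic case --- $\eta$ induces the identity on $\pi_1$, hence is homotopic to $id$, hence isotopic to $id$ --- closes the general case with no annulus construction at all. As written, though, your proposal establishes the theorem only for hyperbolic $M$, and the bridge you propose for the remaining cases is both unfinished and aimed at a difficulty that the literature lets you bypass.
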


In the proof of Theorem \ref{t.main},  we also need the following result due to 
Barthelme and Gogolev \cite{BG}. 
\begin{theorem}\label{t.BG}
Let $\phi_t$ be a skew $\RR$-covered Anosov flow on a closed orientable $3$-manifold $M$, and $h$ be a self orbit equivalence of $\phi_t$ that is isotopic to $id$ on $M$.
Then there exist an integer $k$ such that $h\circ \eta^{-2k}$ preserves every orbit of
$\phi_t$. Moreover, if $M$ is a hyperbolic $3$-manifold, $k$ above is unique. 
\end{theorem}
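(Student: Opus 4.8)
The plan is to lift everything to the orbit space $\cO$ and reduce the statement to a rigidity property of $\tau := \eta^2$ there. Choose a lift $\tilde h$ of $h$ to the universal cover $\Wi M$. Since $h$ is isotopic to $id$ on $M$, we may take $\tilde h$ commuting with every deck transformation; passing to the quotient by the flow, $\tilde h$ descends to a homeomorphism $\bar h$ of $\cO$ that commutes with the induced $\pi_1(M)$-action. Because $h$ is a self orbit equivalence of $\phi_t$ (not of $\phi_{-t}$), it preserves the orientation of the flow and hence carries weak stable leaves to weak stable leaves and weak unstable leaves to weak unstable leaves; so $\bar h$ preserves both the horizontal (stable) and vertical (unstable) rulings of the strip $\cO$ and induces orientation-preserving homeomorphisms $\bar h^s$, $\bar h^u$ of the leaf spaces $L^s\cong\RR$ and $L^u\cong\RR$. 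By the Barbot--Fenley structure theory (\cite{Ba}, \cite{Fen}) the map $\tau$ likewise commutes with the deck action and acts on $L^s$, $L^u$ as fixed-point-free increasing homeomorphisms (in the strip model $\tau$ is the diagonal translation). The goal becomes to show $\bar h=\tau^k$ on $\cO$ for a single integer $k$, since this is exactly the assertion that $h\circ\eta^{-2k}$ descends to the identity on $\cO$, i.e.\ preserves every orbit of $\phi_t$.

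First I attach an integer to each periodic orbit. If $\alpha$ is periodic then $h(\alpha)$ is periodic and, as $h\simeq id$, freely homotopic to $\alpha$; by Proposition \ref{p.eta} it equals $\eta^{2k_\alpha}(\alpha)=\tau^{k_\alpha}(\alpha)$ for some $k_\alpha\in\ZZ$. On the leaf space this reads $\bar h^s(l^s_\alpha)=(\tau^s)^{k_\alpha}(l^s_\alpha)$ and $\bar h^u(l^u_\alpha)=(\tau^u)^{k_\alpha}(l^u_\alpha)$, where $l^s_\alpha$, $l^u_\alpha$ are the stable and unstable leaves carrying the lift of $\alpha$, each a fixed point of the deck element representing $[\alpha]$. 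Note that the points $(\tau^s)^k(l^s_\alpha)$ are genuinely distinct in $L^s$ for distinct $k$, so $k_\alpha$ is well defined even when several $\eta^{2k}(\alpha)$ coincide downstairs.

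The heart of the argument, and the step I expect to be the main obstacle, is to show that $k_\alpha$ does not depend on $\alpha$. I would run a continuity--density argument in $L^s$. Periodic orbits are dense, so the leaves $l^s_\alpha$ are dense in $L^s\cong\RR$, and on this dense set $\bar h^s$ agrees with the assignment $l\mapsto(\tau^s)^{k_l}(l)$. Since $\tau$ commutes with the $\pi_1(M)$-action, its displacement on $L^s$ (measured in a $\pi_1(M)$-invariant way) is bounded below by a positive constant because $M$ is compact; hence $(\tau^s)^{m}$ moves every leaf by at least $|m|$ times that constant. If two nearby periodic leaves carried different integers $k_1\neq k_2$, monotonicity and continuity of $\bar h^s$ would force $(\tau^s)^{k_1-k_2}$ to have almost-fixed leaves, contradicting this lower bound. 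Thus $l\mapsto k_l$ is locally constant on a dense subset of the connected line $L^s$, so it equals a single integer $k$. The same integer $k_\alpha$ simultaneously describes the unstable leaf of each periodic orbit, so likewise $\bar h^u=(\tau^u)^k$. Since every point of $\cO$ is the unique intersection of its stable and unstable leaves, $\bar h^s=(\tau^s)^k$ and $\bar h^u=(\tau^u)^k$ force $\bar h=\tau^k$ on all of $\cO$, whence $h\circ\eta^{-2k}$ preserves every orbit.

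For uniqueness assume $M$ hyperbolic. The integer $k$ is determined by the choice of deck-equivariant lift $\tilde h$, and two such lifts differ by a deck transformation that commutes with all of $\pi_1(M)$, i.e.\ lies in the center; central elements act on $\cO$ precisely as powers of $\tau$ (translation in the flow direction), which is exactly the ambiguity that would shift $k$. When $M$ is hyperbolic, $\pi_1(M)$ is centerless, so the equivariant lift, and hence $k$, is unique. Equivalently, non-uniqueness of $k$ would require $\eta^{2m}(\alpha)=\alpha$ for some periodic orbit $\alpha$ and some $m\neq 0$, which the last clause of Proposition \ref{p.eta} excludes in the hyperbolic case; this also explains why uniqueness can genuinely fail for Seifert-fibered or graph manifolds, where such central slack is present.
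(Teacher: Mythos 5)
A preliminary remark: the paper does not prove Theorem \ref{t.BG} at all; it imports it from Barthelme--Gogolev \cite{BG}, so there is no internal proof to compare against. Your overall skeleton --- pass to a $\pi_1(M)$-equivariant lift $\bar h$ acting on the orbit space $\cO$, attach to each periodic orbit $\alpha$ the integer $k_\alpha$ with $\bar h(\tilde\alpha)=\tau^{k_\alpha}(\tilde\alpha)$, show $k_\alpha$ is independent of $\alpha$, deduce $\bar h=\tau^k$, and obtain uniqueness in the hyperbolic case from the last clause of Proposition \ref{p.eta} --- is the right one and is essentially the strategy of \cite{BG}; the set-up, the reduction of the statement to $\bar h=\tau^k$, and the uniqueness clause are sound.

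The genuine gap sits exactly where you predicted, and it is logical rather than technical. Continuity of $\bar h^s$ at a periodic leaf $l$ only yields a radius $\epsilon(l)>0$, depending on $l$, within which all periodic leaves carry the same integer; from this you conclude that a function ``locally constant on a dense subset of the connected line $L^s$'' must be constant. That principle is false: on $\QQ\subset\RR$, the function equal to $0$ below $\sqrt{2}$ and to $1$ above it is locally constant, since nothing prevents $\epsilon(l)\to 0$. Your attempt to supply uniformity via a ``$\pi_1(M)$-invariant'' lower bound on the displacement of $\tau^s$ does not work either: no $\pi_1(M)$-invariant metric on $L^s$ exists, because deck elements corresponding to periodic orbits have attracting/repelling fixed points on $L^s$ and hence cannot act by isometries. (What is true, and suffices, is that $\tau^s$ is an honest translation in the strip coordinates.) Two ways to close the gap: (i) observe that $\eta$ is defined purely in terms of the two oriented rulings of $\cO$, so $\bar h$ commutes with $\eta$; hence $\bar h^s$ commutes with the translation $\tau^s$, is therefore uniformly continuous, your radius becomes uniform, and then density does give constancy; or (ii) avoid metrics entirely: given periodic leaves $l_1,l_2$, replace $l_2$ by its $\tau^s$-translate lying in $[l_1,\tau^s(l_1))$ (still a periodic leaf, with the same integer), and monotonicity of $\bar h^s$ applied to $l_1\le l_2<\tau^s(l_1)$ forces $k_{l_2}=k_{l_1}$. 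Separately, the monotonicity you invoke is itself unproved and does not follow from $h$ being a self orbit equivalence: the map $f_2$ of this very paper is a self orbit equivalence of the skew flow $X_t$ that reverses the coorientations of both invariant foliations, so its lifts act on the leaf spaces as decreasing maps. Increasingness of $\bar h^s$ must be extracted from $h\simeq id$; for instance, a decreasing $\bar h^s$ would have a unique fixed leaf, which by commutation with the deck action would be a $\pi_1(M)$-invariant leaf, impossible because weak stable leaves are planes, annuli or M\"obius bands while $\pi_1(M)$ is not cyclic.
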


\section{A representation of $\mbox{Mod}^+ (N)$}\label{s.ModN}
Firstly, we define a homeomorphism $g_1: W\to W$ by $g_1((x,y),t) =((x_1,y_1),1-t)$
where $\left(\begin{array}{ccc}
                                         x_1 \\
                                         y_1 
                                       \end{array}
                                     \right)=B_1\left(\begin{array}{ccc}
                                         x \\
                                         y 
                                       \end{array}
                                     \right)$
and $B_1=  \left(\begin{array}{cc}
                                         -1 & 0 \\
                                         1 & 1 \\
                                       \end{array}
                                     \right)$. $g_1$ is well-defined that is essentially
due to the fact that $B_1 A B_1^{-1}= A^{-1}$. It is easy to check that $g_1$ is an orbit equivalence between the Anosov flows $Y_t$ and $Y_{-t}$ on $W$.
Define $g_2: W\to W$ by $g_2((x,y),t) =((x_2,y_2),t)$
where $\left(\begin{array}{ccc}
                                         x_2 \\
                                         y_2 
                                       \end{array}
                                     \right)=B_2\left(\begin{array}{ccc}
                                         x \\
                                         y 
                                       \end{array}
                                     \right)$ and $B_2 =  \left(\begin{array}{cc}
                                         -1 & 0 \\
                                         0 & -1 \\
                                       \end{array}
                                     \right)$.
It is also easy to check that $g_2$ is a self orbit equivalence of $Y_t$.
Similarly define  $g_3: W\to W$ by $g_3((x,y),t) =((x_3,y_3),1-t)$
where $\left(\begin{array}{ccc}
                                         x_3 \\
                                         y_3 
                                       \end{array}
                                     \right)=B_3\left(\begin{array}{ccc}
                                         x \\
                                         y 
                                       \end{array}
                                     \right)$
and $B_3=  \left(\begin{array}{cc}
                                         1 & 0 \\
                                         -1 & -1 \\
                                       \end{array}
                                     \right)$.
Similar to the case for $g_1$, $g_3$ is also an orbit equivalence between the Anosov flows $Y_t$ and $Y_{-t}$. Moreover define by $g_0$  the identity map on $W$.

\begin{lemma}\label{l.gZ4}
$\mathcal{G}:=\{[g_0], [g_1], [g_2],[g_3]\}$ is a subgroup of $\mbox{Mod}^+ (W)$ that is isomorphic to $\ZZ_2\oplus \ZZ_2$.  Here $[g_i]$ ($i=0,1,2,3$) represents the isotopy class of $g_i$.
\end{lemma}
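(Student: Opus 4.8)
The plan is to first promote the statement to the level of actual homeomorphisms and only afterwards pass to isotopy classes. Writing each generator as a pair $g_i=(B_i,\sigma_i)$, where $\sigma_i=\mathrm{id}$ for $i=0,2$ and $\sigma_i=r$ with $r(t)=1-t$ for $i=1,3$, composition obeys the bookkeeping rule $(B,\sigma)\circ(B',\sigma')=(BB',\sigma\circ\sigma')$ whenever both maps descend to $W$. Each $g_i$ indeed descends: for $i=1,3$ this is the relation $B_iAB_i^{-1}=A^{-1}$ already recorded for $g_1$ (and checked identically for $B_3$), while for $i=2$ it is the fact that $B_2=-I$ commutes with $A$. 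A direct matrix computation then gives $B_1^2=B_2^2=B_3^2=I$ together with $B_1B_2=B_2B_1=B_3$, $B_1B_3=B_3B_1=B_2$ and $B_2B_3=B_3B_2=B_1$; since $r^2=\mathrm{id}$ and $r\circ\mathrm{id}=\mathrm{id}\circ r=r$, these identities say exactly that $\{g_0,g_1,g_2,g_3\}$ is closed under composition, that each $g_i$ is its own inverse, and that the product of any two distinct nontrivial generators is the third. Hence $\{g_0,g_1,g_2,g_3\}$ is a subgroup of $\mbox{Homeo}(W)$ abstractly isomorphic to $\ZZ_2\oplus\ZZ_2$.

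Next I would check orientation. Orienting $W$ by the product orientation of $\TT^2\times[0,1]$ (which descends because $\det A=1$), the sign by which $g_i$ acts on the orientation is $\mathrm{sign}(\det B_i)\cdot\mathrm{sign}(\sigma_i')$. This equals $(+1)(+1)$ for $g_2$ and $(-1)(-1)$ for $g_1$ and $g_3$, so all four maps lie in $\mbox{Homeo}^+(W)$. Since the quotient $\mbox{Homeo}^+(W)\to\mbox{Mod}^+(W)$ is a group homomorphism, $\mathcal{G}$ is automatically a subgroup of $\mbox{Mod}^+(W)$ and a quotient of $\ZZ_2\oplus\ZZ_2$; it therefore suffices to prove that this homomorphism is injective on $\{g_i\}$, i.e. that none of $g_1,g_2,g_3$ is isotopic to the identity. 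By the homomorphism property this simultaneously yields that the four classes are pairwise distinct.

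For $g_1$ and $g_3$ the obstruction is homological. Using the Wang sequence of the mapping torus together with $\det(A-I)=-1$, the fiber classes die and $H_1(W;\ZZ)\cong\ZZ$ is generated by the base-circle class $[s]$. Because $g_1$ and $g_3$ reverse the $[0,1]$-direction they act by $-1$ on this $\ZZ$, whereas any map isotopic to the identity acts by $+1$; hence $g_1,g_3\not\simeq\mathrm{id}$. The genuinely delicate case, and the main obstacle, is $g_2=(-I,\mathrm{id})$, which preserves the base circle and therefore acts trivially on $H_1$, so homology cannot detect it. Here I would argue in $\pi_1$: the manifold $W$ is aspherical with $\pi_1(W)\cong\ZZ^2\rtimes_A\ZZ$, so a self-homeomorphism isotopic (indeed merely homotopic) to the identity must induce an inner automorphism. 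The commutator subgroup is exactly the fiber $[\pi_1(W),\pi_1(W)]=\ZZ^2$ (because $A-I$ is onto), hence characteristic, and every inner automorphism restricts on it to some power $A^k$. The automorphism induced by $g_2$ restricts to $-I$, and since $A$ is hyperbolic with positive eigenvalues one has $-I\neq A^k$ for all $k$; thus $g_{2*}$ is not inner and $g_2\not\simeq\mathrm{id}$.

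Combining the last two paragraphs, the homomorphism $\{g_i\}\to\mbox{Mod}^+(W)$ has trivial kernel, so $\mathcal{G}\cong\ZZ_2\oplus\ZZ_2$, completing the argument. I expect the bookkeeping of the first paragraph and the homological detection of $g_1,g_3$ to be routine; the crux is the characteristic-subgroup computation separating $g_2$ from the identity, precisely because that class is invisible to $H_1$.
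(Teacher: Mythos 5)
Your proof is correct, and its overall skeleton -- verify the $\ZZ_2\oplus\ZZ_2$ relations among the $B_i$ at the level of genuine homeomorphisms, then show that none of $g_1,g_2,g_3$ is isotopic to the identity so that the projection to $\mbox{Mod}^+(W)$ has trivial kernel -- is the same as the paper's. For $g_1$ and $g_3$ your argument and the paper's are essentially identical: the paper detects the reversal of the base direction via the orbit $\omega$, which meets a fiber torus once and is sent to $-\omega$, while you phrase the same homological obstruction as the action by $-1$ on $H_1(W)\cong\ZZ$ computed from the Wang sequence and $\det(A-I)=-1$. The genuine divergence is at $g_2$, which both you and the paper single out as the delicate case because it acts trivially on $H_1$. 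The paper exhibits two explicit period-three orbits $\beta_1,\beta_2$ of the suspension with $g_2(\beta_1)=\beta_2$ and invokes Nielsen fixed point theory to conclude that they are not freely homotopic, hence $g_2\not\simeq\mathrm{id}$. You instead show that the induced automorphism of $\pi_1(W)\cong\ZZ^2\rtimes_A\ZZ$ is not inner: the fiber $\ZZ^2$ equals the commutator subgroup (again since $\det(A-I)=-1$), hence is characteristic; inner automorphisms restrict to it as powers $A^k$, and no $A^k$ equals $-I$ because $A$ is hyperbolic with positive eigenvalues. Both arguments are sound; I checked your semidirect-product computation (conjugation by $(v,t^k)$ restricts to $A^k$ on the fiber) and it is right. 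Your route is more self-contained and elementary, avoiding the citation to Nielsen theory. The paper's route has a structural advantage you could not have seen: the orbits $\beta_1,\beta_2$ and the orbit-exchange argument are reused verbatim in Lemma \ref{l.3} to prove that $f_2$ is not isotopic to the identity on the surgered manifold $M_k$, where $\pi_1$ changes and your algebraic computation would no longer apply, whereas $\beta_1,\beta_2$ survive the surgery since they are disjoint from $\omega$.
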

\begin{proof}
Based on some elementary calculations, we can get that $B_i^2 =\left(\begin{array}{cc}
                                         1 & 0 \\
                                         0 & 1 \\
                                       \end{array}
                                     \right)$ ($i=1,2,3$) and $B_iB_j=B_jB_i=B_k$
where  $(i,j,k)$ is any permutation of $(1,2,3)$. These equalities ensure that 
$g_i^2 =Id_W =g_0$ ($i=1,2,3$) and $g_ig_j =g_jg_i =g_k$ where $(i,j,k)$ is any permutation of $(1,2,3)$.  By the above equalities, we can build a 
homomorphism $\varphi: \ZZ_2 \oplus \ZZ_2 \to \mathcal{G}$ by $\varphi((0,0))=[g_0]$, 
$\varphi((1,0))=[g_1]$, $\varphi((1,1))=[g_2]$ and $\varphi((0,1))=[g_3]$.
Then to prove the lemma, it only needs to prove that each of $g_1$, $g_2$ and $g_3$ is not isotopic to $g_0=id$. 
Notice that  $\omega$ can represent a free element of $H_1(W)$
since $\omega$ intersects to a fiber torus once. Then $g_1(\omega)=-\omega$ and $\omega$
are not isotopic in $W$, therefore $g_1$ is not isotopic to $g_0=id$.  For the same reason, one can get that $g_3$ is not isotopic to $g_0=id$.
$\{(\frac{1}{4},0), (\frac{1}{2},\frac{1}{4}), (\frac{1}{4}, \frac{3}{4})\}$
and $\{(\frac{3}{4},0), (\frac{1}{2},\frac{3}{4}), (\frac{3}{4}, \frac{1}{4})\}$ are two periodic orbits of $A$ which
correspond to two different periodic orbits $\beta_1$ and $\beta_2$ of $Y_t$.
Since  $\left(\begin{array}{ccc}
                                         \frac{3}{4} \\
                                         0 
                                       \end{array}
                                     \right)=B_2\left(\begin{array}{ccc}
                                         \frac{1}{4}  \\
                                         0 
                                       \end{array}
                                     \right)$ on $\TT^2$, 
therefore, $g_2(\beta_1)=\beta_2$. By Nielsen fixed point theory (see, e.g. \cite{Ji}), $\beta_1$ and $\beta_2$ are not freely homotopic, therefore $g_2$
is not isotopic to $g_0=id$. 
\end{proof}

Blow up $O\in \TT^2$ to a circle $S_o^1$ such that 
a point $x\in S_o^1$ corresponds to a vector $\vec{v}\in T_o^1 (\TT^2)$.
And $\TT^2$ is blowed up to a punctured torus $T_0$ such that,
\begin{enumerate}
\item $\partial T_0 = S_o^1$;
\item every point $x\in S_o^1$ is the starting point of the ray  associted to the ray $l_x$ in $\TT^2$ that starts at $O$ and is parallel to $\vec{v}$, where $\vec{v}\in T_o^1 (\TT^2)$  
and corresponds to $x$.
\end{enumerate}
Naturally we can identify $\TT^2 \setminus \{O\} $ with $T_0 \setminus S_o^1$
by a homeomorphism $\phi: \TT^2 \setminus \{O\}\to T_0 \setminus S_o^1$.
The self homeomorphism $\phi A \phi^{-1}$ on $T_0 \setminus S_o^1 $
 can be extended to a homeomorphism $\overline{A}: T_0\to T_0$ as follows.
Recall that every point $x\in S_o^1$ corresponds to a ray  $l_x$ in $\TT^2$ 
that starts at $O$. $A$ maps $l_x$ to another ray $l$ starting at $O$.
Let $y$ be the end of $\phi (l\setminus \{O\} )$. Then $\overline{A}(x):=y$.
One can automatically check that  $\overline{A}$ is a homeomorphism on $T_0$.
 For every $i\in\{1,2,3\}$, the self homeomorphism $\phi B_i \phi^{-1}$ on $T_0 \setminus S_o^1 $ can be similarly extended to a homeomorphism $\overline{B_i}: T_0\to T_0$.

We can identify $N$ with $\mbox{Map} (T_0, \overline{A})$ where $\mbox{Map} (T_0, \overline{A})$ is the standard mapping torus of $\overline{A}$ on $T_0$. 
Take an oriented circle $l\subset \partial N$ that is the boundary of some $T_0\times \{t_0\}$ in $N$.
Take another oriented circle $m \subset \partial N$ that intersects each circle $m_t =\partial T_0\times \{t\}$ ($t\in[0,1]$) at one point $q_t$ that is associated to a stable separatry of
$A$ at $O$. 

Notice that $B_1AB_1^{-1}=A^{-1}$ and the fact that $\overline{B}_1$ and $\overline{A}$
are the continuous extension of $\phi B_1 \phi^{-1}$ and $\phi A \phi^{-1}$ respectively, then we have that $\overline{B}_1\overline{A}\overline{B}_1^{-1}=\overline{A}^{-1}$.
Define a self homeomorphism
$h_1$ on $N$ by $h_1 (p, t):= (\overline{B}_1(p), 1-t)$. $h_1$ is well-defined since
$\overline{B}_1\overline{A}\overline{B}_1^{-1}=\overline{A}^{-1}$.
Similarly, one can define a homeomorphism $h_3 (p, t):= (\overline{B_3}(p), 1-t)$ on $N$
and a homeomorphism $h_2 (p, t):= (\overline{B_2}(p), t)$ on $N$. We define $h_0=id\mid_N$. 

\begin{proposition}\label{p.hZ4}
$\mbox{Mod}^+ (N)=\mathcal{H}$  that is isomorphic to
$\ZZ_2\oplus \ZZ_2$.  Here $\mathcal{H}:= \{[h_0], [h_1], [h_2],[h_3]\}$ and $[h_i]$ ($i=0,1,2,3$) represents the isotopy class of $h_i$.
\end{proposition}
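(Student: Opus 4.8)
The plan is to identify $\mathcal{H}$ with the whole orientation preserving mapping class group by a counting argument. First I would record that, by the classical computation $\mbox{Mod}(N)\cong D_4$ quoted in the introduction, the group $\mbox{Mod}(N)$ has order $8$; since the figure-eight knot is amphichiral the orientation homomorphism $\mbox{Mod}(N)\to\ZZ_2$ is onto, so its kernel $\mbox{Mod}^+(N)$ has index $2$ and hence order $4$. The strategy is then to prove that $\mathcal{H}$ is a subgroup of $\mbox{Mod}^+(N)$ which is at the same time a quotient of $\ZZ_2\oplus\ZZ_2$ and has exactly four elements; comparing orders will force $\mathcal{H}=\mbox{Mod}^+(N)$, and being a four-element quotient of $\ZZ_2\oplus\ZZ_2$ it must be isomorphic to $\ZZ_2\oplus\ZZ_2$.

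Next I would transport the group law from the matrices to the $h_i$, exactly as was done for $\mathcal{G}$ in Lemma \ref{l.gZ4}. The relations $B_i^2=I$ and $B_iB_j=B_jB_i=B_k$ pass to the blow-up extensions, giving $\overline{B}_i^2\simeq id$ and $\overline{B}_i\overline{B}_j\simeq\overline{B}_k$ in $\mbox{Mod}(T_0)$; combining these with the bookkeeping of the involution $t\mapsto 1-t$ yields $h_i^2\simeq id$ and $h_ih_j\simeq h_k$ for every permutation $(i,j,k)$ of $(1,2,3)$ (for instance $h_1h_3(p,t)=(\overline{B}_1\overline{B}_3(p),t)\simeq h_2(p,t)$). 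Thus $\mathcal{H}$ is a homomorphic image of $\ZZ_2\oplus\ZZ_2$. Simultaneously each $h_i$ is orientation preserving: $h_2$ fixes the base coordinate $t$ and $\det B_2=+1$, while $h_1$ and $h_3$ reverse $t$ and have $\det B_1=\det B_3=-1$, so in each case the fiber orientation and the base orientation are reversed together or not at all. Hence $\mathcal{H}\subseteq\mbox{Mod}^+(N)$.

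It remains to show that the four classes $[h_0],[h_1],[h_2],[h_3]$ are pairwise distinct, i.e. $|\mathcal{H}|=4$. Because $b_1(N)=1$, the fibration of $N$ over $S^1$ is unique up to isotopy, so every mapping class acts on $H^1(N;\ZZ)\cong\ZZ$ by $\pm 1$; this gives a homomorphism $\rho:\mbox{Mod}(N)\to\{\pm 1\}$ recording whether the fiber direction is preserved. Since $h_0,h_2$ fix $t$ and $h_1,h_3$ send $t$ to $1-t$, we get $\rho(h_0)=\rho(h_2)=1$ and $\rho(h_1)=\rho(h_3)=-1$, which separates $\{[h_0],[h_2]\}$ from $\{[h_1],[h_3]\}$. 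To separate $[h_2]$ from $[h_0]$ I would reuse the two periodic orbits $\beta_1,\beta_2$ of the proof of Lemma \ref{l.gZ4}: they avoid $\omega$, hence survive in $N$, and $g_2(\beta_1)=\beta_2$ gives $h_2(\beta_1)=\beta_2$. Since the inclusion $N\hookrightarrow W$ induces a surjection $\pi_1(N)\twoheadrightarrow\pi_1(W)$, and $\beta_1,\beta_2$ are not freely homotopic in $W$, they are not freely homotopic in $N$ either; so $h_2\not\simeq id$. Finally, if $[h_1]=[h_3]$ then $h_2\simeq h_1h_3\simeq h_1^2\simeq id$, a contradiction, so $[h_1]\neq[h_3]$. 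Together with $\rho$ this rules out every coincidence, so $|\mathcal{H}|=4$.

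Putting these together, $\mathcal{H}$ is a subgroup of $\mbox{Mod}^+(N)$ of order $4$ that is a quotient of $\ZZ_2\oplus\ZZ_2$, whence $\mathcal{H}\cong\ZZ_2\oplus\ZZ_2$ and, comparing orders with $|\mbox{Mod}^+(N)|=4$, we get $\mathcal{H}=\mbox{Mod}^+(N)$. I expect the distinctness step to be the main obstacle, and within it the inequality $h_2\not\simeq id$: the action on $H_1(\partial N)$ is blind to $h_2$ (it moves the boundary torus like a rotation), and $\rho$ cannot see it either, so a genuinely finer invariant is required. This is why the Nielsen-type argument through the non-freely-homotopic periodic orbits $\beta_1,\beta_2$, transported from $W$, is the essential ingredient, exactly as in Lemma \ref{l.gZ4}.
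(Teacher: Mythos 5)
Your proposal is correct and takes essentially the same route as the paper: both count $|\mbox{Mod}^+(N)|=4$ from $\mbox{Mod}(N)\cong D_4$ together with the existence of an orientation-reversing class, transport the $\ZZ_2\oplus\ZZ_2$ relations from the matrices $B_i$ to the $h_i$, and then rule out triviality of the classes homologically for $h_1,h_3$ and via the non-freely-homotopic orbits $\beta_1,\beta_2$ (transported from $W$ to $N$) for $h_2$. The only cosmetic differences are that the paper exhibits the orientation-reversing element explicitly (a map $h_4$ induced by a matrix $B_4$ conjugating $A$ to $A^{-1}$) instead of citing amphichirality of the figure-eight knot, and detects $[h_1],[h_3]\neq[h_0]$ by the intersection number of $m$ with a fiber rather than by your homomorphism $\rho$ on $H^1(N;\ZZ)$ --- the same invariant in different clothing.
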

\begin{proof}
Define   $g_4: W\to W$ by $g_4((x,y),t) =((x_4,y_4),1-t)$
where $\left(\begin{array}{ccc}
                                         x_4 \\
                                         y_4 
                                       \end{array}
                                     \right)=B_4\left(\begin{array}{ccc}
                                         x \\
                                         y 
                                       \end{array}
                                     \right)$
and $B_4=  \left(\begin{array}{cc}
                                         0 & 1 \\
                                         -1 & 0 \\
                                       \end{array}
                                     \right)$. 
$g_4$ is well-defined since $B_4 A B_4^{-1}=A^{-1}$.
It is easy to check that $g_4$ is an orientation-reversing homeomorphism on $W$. Similar the construction of 
$h_i$ ($i=1,2,3$), $g_4$ can induce an orientation-reversing homeomorphism $h_4$
on $N$. This means that $\mbox{Mod} (N)$ contains an orientation-reversing element
$[h_4]$.  Notice that the mapping class group of the figure-eight knot exterior  $\mbox{Mod}(N)\cong D_4$ (see for instance  Table 14.2 of \cite{Mar}) where $D_4$ is the dihedral group  that is the symmetry group of the square. Then $|\mbox{Mod}(N)|=8$. Since $\mbox{Mod} (N)$ contains an orientation-reversing element, then $|\mbox{Mod}^+(N)|=4$. By Lemma \ref{l.gZ4} and its proof,
we  know that $\mathcal{G}=\{[g_0], [g_1], [g_2],[g_3]\}\cong \ZZ_2 \oplus \ZZ_2$, and $g_i^2 =Id_W =g_0$ ($i=1,2,3$) and $g_ig_j =g_jg_i =g_k$ where $(i,j,k)$ is any permutation of $(1,2,3)$. Moreover $h_i$ on $N$
is a continuous extension of $\Phi g_i \Phi^{-1}$ on $N\setminus \partial N$.
Then we have that $h_i^2 =h_0=Id\mid_N$ ($i=1,2,3$) and $h_ih_j =h_jh_i =h_k$ where $(i,j,k)$ is any permutation of $(1,2,3)$. Due to these equalities, similar to the proof of Lemma \ref{l.gZ4},  we can construct a surgective homomorphism from $\ZZ_2\oplus \ZZ_2 $ to $\mathcal{H}$. Further recall that 
$|\mbox{Mod}^+(N)|=4$, to complete the proof of the proposition,
 we only need to show that each of $h_1$, $h_2$ and $h_3$ is not isotopic to
$h_0= id_N$. This essentially is the same to the proof that $g_i$ ($i=1,2,3$)
is not isotopic to $g_0=id$ on $W$. 
Below we provide the details of a proof.

Due to its definition, $h_1$ maps $m$ to $-m$. Further notice that $m$ intersects  the once-punctured torus  $T_0\times \{t_0\}$ once, then by using intersection number,
one can easily prove that $h_1$ is not isotopic to
$h_0= id_N$. Similarly we have that $h_3$ is not isotopic to  $h_0= id_N$.
In the proof of Lemma \ref{l.gZ4}, we know that there are two disjoint oriented knots 
$\beta_1$ and $\beta_2$ in $W$ such that, 
\begin{enumerate}
\item each of them is disjoint with $\omega$;
\item $g_2 (\beta_1)=\beta_2$
\item $\beta_1$ and $\beta_2$ are not freely homotopic in $W$.
\end{enumerate}
Let $\beta_1'=\Phi(\beta_1)$ and $\beta_2'=\Phi(\beta_2)$ that are two disjoint
oriented knots in $N$. It is easy to check that $\Phi:(W\setminus \omega,\beta_1, \beta_2)=(N\setminus \partial N,\beta_1', \beta_2')$.  Notice that $\beta_1$ and $\beta_2$ are not freely homotopic in $W$, so $\beta_1$ and $\beta_2$ are not freely homotopic in $W\setminus \omega$ and therefore $\beta_1'$ and $\beta_2'$ are not freely homotopic in
$N\setminus \partial N$. Hence  $\beta_1'$ and $\beta_2'$ are not freely homotopic in $N$. Further observe that $h_2 (\beta_1')=\beta_2'$ since $g_2 (\beta_1)=\beta_2$ and 
$h_2= \phi g_2 \phi^{-1}$ in $N\setminus \partial N$.
Then $\beta_1'$ and $h_2 (\beta_1')$ are not freely homotopic in $N$, therefore
$h_2$ is not isotopic to
$h_0= id_N$. The proposition is proved.
\end{proof}

\section{The homeomorphisms $f_i$ and $f_i'$ on $M_k$}\label{s.home}
$\Phi: W\setminus \omega \to N\setminus \partial N$ maps the flow $Y_t$ on $W\setminus \omega$ to a flow  $\Phi(Y_t)$ on $N\setminus \partial N$. Then we can get the Anosov flow $X_t$ on $M_k$ from $Y_t$ through a DFG surgery. For our purpose, here we just introduce this procedure in Fried's way (\cite{Fr}). Due to Fried \cite{Fr}, $\Phi(Y_t)$ can be continuously extended to a continuous flow $Z_t$ on $N$ such that
$Z_t\mid_{\partial N}$ is a nonsingular Morse-Smale flow with $4$ periodic orbits
with alternating orientations. (see Figure \ref{f.Frisg}) Take a circle bundle $E$ on $\partial N$ such that each circle fiber $e$ is transverse to the flowlines of $Z_t\mid_{\partial N}$
and is isotopic to $l+km$ in $\partial N$. Here $m$ is a meridian circle of the figure-eight knot exterior $N$ and $l$ is a longitute circle of $N$ that is homology vanishing. We remark that up to isotopy, each of $m$ and $l$ is unique. By pinching every circle fiber
$e$ to a point $x_e$, we blow down $N$ to the manifold $M_k$ by the pinching map $\pi: N\to M_k$. Moreover,  
the flow $Z_t$ on $N$ is blowed down to the Anosov flow $X_t$ on $M_k$. See Figure \ref{f.Frisg}.

\begin{figure}[htp]
\begin{center}
  \includegraphics[totalheight=6cm]{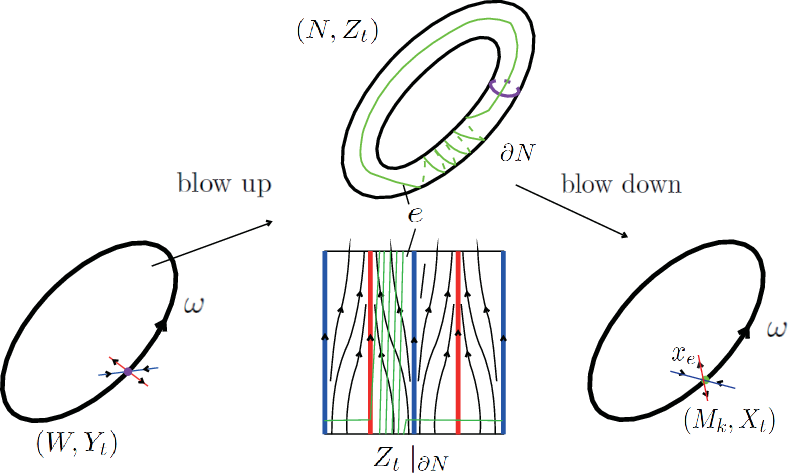}\\
  \caption{Fried surgery in the case $k=5$}\label{f.Frisg}
\end{center}
\end{figure}

Since $g_2$ is a self orbit equivalence of $Y_t$, then $\Phi g_2 \Phi^{-1}$ is a self
orbit equivalence of $\Phi(Y_t)$ on $W\setminus \omega$. Moreover, since $Z_t$ is a continuous extension of $\Phi(Y_t)$, then  as a continuous extension of $\Phi g_2 \Phi^{-1}$ on $N$, $h_2$ is is a self
orbit equivalence of $Z_t$ on $N$. Further notice that up to isotopy, $h_2:m\to m$ and $h_2:l\to l$. Then up to isotopy, $h_2$ preserves the circle fibration structure $E$ on
$\partial N$. Then, after composing a self orbit equivalence preserving every orbit if necessary, we can assume that  $h_2$ preserves $E$.
Then $h_2$ naturally induces a self orbit equivalence $f_2$ of $X_t$ on $M_k$
such that $\pi  h_2 = f_2 \pi$.
Notice that $g_i$ ($i=1,3$) is an  orbit equivalence between $Y_t$ and $Y_{-t}$, and $h_i:m\to -m$
and $h_i:l\to -l$. Then similarly, after composing a self orbit equivalence preserving every orbit if necessary,  $h_i$ naturally induces an  orbit equivalence $f_i$ between  $X_t$ and $X_{-t}$ on $M_k$
such that $\pi  h_i = f_i \pi$. $h_0=id\mid_N$ naturally induces  $f_0=id$ on $M_k$.
We can  conclude the above discussion to the following lemma.

\begin{lemma}\label{l.fi}
For every $i\in \{0,1,2,3\}$, after composing a self orbit equivalence preserving every orbit of $Z_t$ if necessary,  the self homeomorphism $h_i$ on $N$ induces a self homeomorphism $f_i$ on $M_k$ such that $\pi  h_i = f_i \pi$. Moreover,
\begin{enumerate}
\item if $i=0$ or $i=2$, $f_i$ is a self orbit equivalence of $X_t$ on $M_k$; 
\item if $i=1$ or $i=3$, $f_i$ is an orbit equivalence between  $X_t$ and $X_{-t}$ on $M_k$.
\end{enumerate}
\end{lemma}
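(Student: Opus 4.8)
The plan is to establish the orbit-equivalence property on the open manifold $N\setminus\partial N$, where $h_i$ is conjugate to $g_i$, then pass to the continuous extension $Z_t$ on all of $N$, and finally descend through the pinching map $\pi\colon N\to M_k$. The only delicate point is matching $h_i$ with the circle fibration $E$ that $\pi$ collapses; everything else is a transport of structure along the explicit homeomorphisms $\Phi$ and $\pi$.

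First I would record what happens on the interior. By construction $h_i$ restricted to $N\setminus\partial N$ coincides with $\Phi g_i \Phi^{-1}$. Since $g_0$ and $g_2$ are self orbit equivalences of $Y_t$ while $g_1,g_3$ are orbit equivalences between $Y_t$ and $Y_{-t}$ (Section \ref{s.ModN}), and $\Phi$ carries $Y_t$ to $\Phi(Y_t)$, the map $\Phi g_i \Phi^{-1}$ is a self orbit equivalence of $\Phi(Y_t)$ for $i=0,2$ and an orbit equivalence from $\Phi(Y_t)$ to $\Phi(Y_{-t})$ for $i=1,3$. Because $Z_t$ is the continuous extension of $\Phi(Y_t)$ and $h_i$ is the continuous extension of $\Phi g_i \Phi^{-1}$, a density-and-continuity argument upgrades this to the statement that $h_i$ sends orbits of $Z_t$ to orbits of $Z_t$, preserving time orientation for $i=0,2$ and reversing it for $i=1,3$. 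In particular $h_i(\partial N)=\partial N$ and $h_i$ permutes the four periodic orbits of $Z_t|_{\partial N}$.

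Next I would descend to $M_k$. The map $\pi$ collapses each fiber of the circle bundle $E$, whose fibers are isotopic to $l+km$; so for $h_i$ to factor through $\pi$ it suffices that $h_i$ carry $E$ to $E$. I would compute the action of $h_i$ on $H_1(\partial N)$: for $i=2$ one has $h_2(m)=m$ and $h_2(l)=l$ up to isotopy, so $h_2$ fixes the class $l+km$; for $i=1,3$ one has $h_i(m)=-m$ and $h_i(l)=-l$, so $h_i(l+km)=-(l+km)$, which is the fiber class with reversed orientation and hence again an (unoriented) fiber of $E$. Thus in every case $h_i$ preserves the isotopy class of the fibers of $E$, i.e. $h_i(E)=E$ up to isotopy.

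The hard part will be promoting ``$h_i(E)=E$ up to isotopy'' to an honest equality $h_i(E)=E$, since only then does $\pi h_i$ factor as $f_i\pi$ for a genuine self homeomorphism $f_i$ of $M_k$. My plan for this is precisely the clause ``after composing a self orbit equivalence preserving every orbit of $Z_t$ if necessary'': using that $h_i$ is already an orbit equivalence of $Z_t$, I would correct it by post-composing with a self orbit equivalence $\rho$ of $Z_t$ that fixes every orbit. Such a $\rho$ can slide points along the flow of $Z_t$, and on $\partial N$ the fibers of $E$ are transverse to the Morse-Smale flow $Z_t|_{\partial N}$; so the holonomy of this flow provides enough freedom to isotope $h_i|_{\partial N}$, through orbit equivalences fixing every orbit, until it takes fibers of $E$ precisely to fibers of $E$. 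Replacing $h_i$ by $\rho\circ h_i$ (still denoted $h_i$), we then have $h_i(E)=E$ exactly, so $h_i$ descends to $f_i\colon M_k\to M_k$ with $\pi h_i=f_i\pi$. Finally, since $Z_t$ descends to $X_t$ under $\pi$ and $h_i$ is an orbit equivalence of $Z_t$ of the recorded type, the identity $\pi h_i=f_i\pi$ forces $f_i$ to be a self orbit equivalence of $X_t$ for $i=0,2$ and an orbit equivalence between $X_t$ and $X_{-t}$ for $i=1,3$. I expect the fibration-matching step to be the only real obstacle; the rest is bookkeeping along $\Phi$ and $\pi$.
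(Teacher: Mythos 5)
Your proposal follows essentially the same route as the paper: transport the orbit-equivalence property of $g_i$ through $\Phi$ to $\Phi g_i\Phi^{-1}$, extend continuously to conclude $h_i$ is an orbit equivalence of $Z_t$, use the action on $m$ and $l$ to see that $h_i$ preserves the fibration $E$ up to isotopy, correct by a self orbit equivalence preserving every orbit so that $h_i(E)=E$ exactly, and then descend through $\pi$ to obtain $f_i$ with $\pi h_i=f_i\pi$. The only difference is that you flesh out the fibration-matching step (via transversality of the fibers to the Morse--Smale flow on $\partial N$ and sliding along flow lines), which the paper simply asserts; this is a welcome addition rather than a departure.
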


Let $V$ be a solid torus tubular neighborhood of $\omega$, then the closure of $M_k\setminus V$ is homeomorphic to $N$. For simplicity, we still call it $N$.
Then we can think that $M_k=V\cup_{\psi_k} N$ where $\psi_k: \partial V \to \partial N$
is the corresponding gluing map such that up to isotopy, $\psi_k (m_v)=l+km$. Here 
$m_v$ is a meridian circle of the solid torus $V$. Now we extend the homeomorphism 
$h_i$ ($i=0,1,2,3$) on $N$ to $f_i'$ on $M_k$. First, $h_0=id\mid_N$, it can be naturally
extended to $f_0'=f_0=id$ on $M_k$.
In the other three cases, since up to isotopy $h_i$ maps $m$ to $m$ or $-m$,
there is no obstruction to extend $h_i$ to a homeomorphism $f_i'$ on $M_k$ such that
$f_i'$ maps $\omega$ to itself.
Moreover, it is easy to observe that $f_i'$ and $f_i$ are isotopic.

\section{Proof of the main theorem}\label{s.Pro}
The following lemma was firstly used in the study of Anosov flows by  Bowden and Mann (\cite{BM}). Both of the statement and the proof  essentially follow  \cite{BM}.
\begin{lemma}\label{l.fix}
If $|k|\gg 0$, then for every self homeomorphism $f$ on $M_k$, up to isotopy,
$f(\omega)=\omega$. 
\end{lemma}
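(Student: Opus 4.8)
The plan is to exploit the hyperbolic geometry of $M_k$ together with Mostow rigidity. Since $M_k$ is a closed hyperbolic $3$-manifold (for $|k|>4$), any self-homeomorphism $f$ of $M_k$ is homotopic to an isometry $\phi$ by Mostow rigidity, and by the Gabai--Meyerhoff--Thurston theorem recalled in Remark \ref{r.etaid}, $f$ is in fact isotopic to $\phi$. An isometry permutes the closed geodesics of $M_k$ and preserves their lengths, so it suffices to show that, for $|k|\gg 0$, the orbit $\omega$ is isotopic to the \emph{unique} shortest closed geodesic of $M_k$. Granting this, $\phi$ must fix that geodesic, and hence $f(\omega)$ is isotopic to $\omega$.

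To identify $\omega$ with the shortest geodesic I would invoke Thurston's hyperbolic Dehn surgery theory \cite{Thu}. Recall that $M_k = V\cup N$ is the $k$-Dehn filling of the figure-eight knot exterior $N$, with $\omega$ the core of the filling solid torus $V$. By hyperbolic Dehn surgery, for $|k|\gg 0$ the curve $\omega$ is isotopic to a closed geodesic $\gamma_k$ whose length tends to $0$ as $|k|\to\infty$, while the metric on the complement of a Margulis tube around $\gamma_k$ converges geometrically to the complete hyperbolic metric on the cusped manifold $N$. In particular, by the Margulis lemma the only closed geodesic contained in the thin part of $M_k$ is $\gamma_k$ itself, and every other closed geodesic must cross the Margulis tube and therefore has length bounded below by a constant that does not shrink to $0$ (indeed it grows with $|k|$, since the tube radius grows). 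Consequently, for $|k|$ large enough, $\gamma_k$ is strictly shorter than every other closed geodesic, so it is the unique shortest geodesic of $M_k$.

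Assembling the two steps completes the proof: the isometry $\phi$ preserves the unique shortest geodesic $\gamma_k$, which is isotopic to $\omega$, so up to isotopy $\phi(\omega)=\omega$; since $f$ is isotopic to $\phi$, we conclude that $f(\omega)=\omega$ up to isotopy. The main obstacle is the uniform control in the second step: one must guarantee not merely that the length of $\omega$ tends to $0$, but that the lengths of all \emph{other} closed geodesics stay bounded away from $0$ as $|k|\to\infty$, so that $\omega$ is genuinely the unique systole. This is precisely where the geometric convergence to the figure-eight cusped manifold and the thick--thin decomposition furnished by the Margulis lemma are required, and it is the quantitative reason why the hypothesis $|k|\gg 0$, rather than merely $|k|>4$, is needed.
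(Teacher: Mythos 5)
Your proposal is correct and follows essentially the same route as the paper: the paper's proof likewise cites Thurston's hyperbolic Dehn surgery theory \cite{Thu} to assert that, up to isotopy, $\omega$ is the unique shortest closed geodesic of $M_k$ for $|k|\gg 0$, and then applies Mostow rigidity to replace $f$ (up to isotopy) by an isometry, which must preserve that geodesic. Your write-up merely fills in details the paper leaves implicit, namely the Margulis-tube/geometric-convergence justification of the systole claim and the explicit use of Gabai--Meyerhoff--Thurston \cite{GMT} to upgrade the homotopy from Mostow rigidity to an isotopy.
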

\begin{proof}
Due to \cite{Thu}, up to isotopy, $\omega$ is the unique shortest closed geodesic in
the hyperbolic three manifold $M_k$. Due to Mostow rigidity, up to isotopy, $f$ is an isometry on $M_k$, and therefore $f(\omega)$ is a shortest closed geodeic in $M_k$.
But  $\omega$ is the unique shortest closed geodesic in $M_k$, hence up to isotopy,
$f(\omega)=\omega$.
\end{proof}

\begin{lemma}\label{l.orpr}
If $|k|\gg 0$, we endow an (in fact, any) orientation on $M_k$. Then for every self homeomorphism $f$ on $M_k$, up to isotopy, $f$ preserves the orientation on $M_k$. 
\end{lemma}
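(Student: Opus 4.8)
The plan is to reduce the statement to a homological computation on the boundary torus $\partial N$, using Lemma \ref{l.fix} to control $f$ near $\omega$. First I would invoke Lemma \ref{l.fix}: since $|k|\gg 0$, up to isotopy $f(\omega)=\omega$, so after an ambient isotopy I may assume $f(\omega)=\omega$ as a set. As $f(V)$ and $V$ are both tubular neighborhoods of $\omega$, uniqueness of tubular neighborhoods lets me further arrange $f(V)=V$, and hence $f(N)=N$, where $N=\overline{M_k\setminus V}$ is the figure-eight knot exterior. Because $f$ carries the meridian disk of $V$ to itself, the restriction $f|_N$ must send the filling slope $\mu:=l+km$ (the image under $\psi_k$ of the meridian $m_v$ of $V$) to $\pm\mu$ up to isotopy in $\partial N$.

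Next, arguing by contradiction, I would suppose $f$ reverses the chosen orientation on $M_k$. Then $f|_N$ reverses the orientation of $N$, so the induced map on $H_1(\partial N)$ has determinant $-1$. I would pin down this map using the canonical characterization of the meridian and longitude of the knot exterior $N$: the class $m$ generates $H_1(N)\cong \ZZ$, and $l$ generates the kernel of the map $H_1(\partial N)\to H_1(N)$. Consequently $f_*(l)=\epsilon_2\, l$ and $f_*(m)=\epsilon_1\, m + b\, l$ for some $\epsilon_1,\epsilon_2\in\{+1,-1\}$ and $b\in\ZZ$, and the determinant condition forces $\epsilon_1\epsilon_2=-1$. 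Applying $f_*$ to the filling slope gives $f_*(km+l)=k\epsilon_1\, m+(kb+\epsilon_2)\, l$.

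Finally I would compare this with the requirement $f_*(km+l)=\pm(km+l)$ coming from the first paragraph. Matching the $m$-coordinate fixes the sign of $\epsilon_1$, and matching the $l$-coordinate together with $\epsilon_1\epsilon_2=-1$ reduces to $kb=\pm 2$; since $|k|>4$ this has no integer solution $b$, a contradiction, so $f$ preserves orientation. Note that the only input requiring $|k|\gg 0$ is Lemma \ref{l.fix} (to guarantee $f(\omega)=\omega$); the remaining slope arithmetic already works for $|k|>2$. I expect the main obstacle to be the second step: correctly identifying the action of $f|_N$ on $H_1(\partial N)$ — in particular justifying that $m$ and $l$ are preserved up to the shearing term $b\,l$ and a sign, and extracting the determinant constraint $\epsilon_1\epsilon_2=-1$ — after which the contradiction with the filling slope is immediate.
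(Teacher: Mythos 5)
Your proof is correct, and its overall skeleton matches the paper's: invoke Lemma \ref{l.fix} to arrange $f(V)=V$ and $f(N)=N$, note that the filling slope $l+km$ must be carried to $\pm(l+km)$ because it bounds a meridian disk in $V$, and contradict this with the orientation-reversing hypothesis. The key middle step, however, is handled genuinely differently. The paper pins down the action of $f|_N$ on $\partial N$ by asserting the topological uniqueness, up to isotopy and sign, of the meridian $m$ and the homology-vanishing longitude $l$ of the figure-eight knot exterior; the meridian part of this is a nontrivial fact (resting on the knot complement theorem, or on the known group $\mbox{Mod}(N)\cong D_4$), and it gives $f(m)=\pm m$, $f(l)=\pm l$ exactly, after which the determinant $-1$ condition forces mixed signs and the filling slope is sent to $\pm(l-km)$, an immediate contradiction for any $k\neq 0$. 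You instead use only homological characterizations --- $m$ maps to a generator of $H_1(N)\cong \ZZ$ and $l$ generates $\ker\bigl(H_1(\partial N)\to H_1(N)\bigr)$ --- which are automatically preserved by any self-homeomorphism but pin down $f_*(m)$ only modulo the shear term $b\,l$; you then kill $b$ by the arithmetic $kb=\pm 2$, impossible for $|k|>2$. Your computations (the determinant constraint $\epsilon_1\epsilon_2=-1$ from orientation-reversal on the boundary torus, and the two sign cases in matching $f_*(km+l)=\pm(km+l)$) are correct. What your route buys is self-containedness and generality: no appeal to Gordon--Luecke or to the figure-eight knot's symmetry group, so the argument works verbatim for surgery on any knot exterior with $|k|>2$. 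What the paper's route buys, granting the uniqueness of the meridian slope, is that the contradiction requires no arithmetic and holds for every $k\neq 0$.
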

\begin{proof}
Due to Lemma \ref{l.fix}, up to isotopy, we can assume that $f(\omega)=\omega$.
Recall that $M_k = V\cup_{\psi_k} N$. Here $V$ is a solid torus with a core $\omega$, $N$ is the figure-eight knot exterior (see Section \ref{s.ModN}) and
$\psi_k: \partial V \to  \partial N$ is the corresponding gluing homeomorphism.
Since $f(\omega)=\omega$, up to isotopy, we can assume $f(V)=V$ and $f(N)=N$. Further recall that
up to isotopy, $\psi_k (m_v)= l + k m$ where $m_v$ is an oriented merdian circle of $V$ and 
$m$ and $l$ are the standard oriented meridian circle and  oriented longitute circle in $\partial N$.

We assume by contradiction that $f$ is not orientation preserving, then 
$f\mid_N$ is also not orientation preserving. Since up to isotopy, as the meridian of the figure-eight knot exterior and as the unique longitute which is homology vanishing, each of $m$ and $l$ is unique. This means that in this case, up to isotopy, either $f(l)=-l$ and $f(m)=m$, or $f(l)=l$ and $f(m)=-m$. Further since up to isotopy, $\psi_k (m_v)= l + k m$ on $\partial N$, then $f(\psi_k (m_v))$ is isotopic to $\pm (l-km)$ on $\partial N$. This is impossible since $f(\psi_k (m_v))$ must bounds a disk in $V$
in $M_k= V\cup_{\psi_k} N$, but such a circle must isotopes to either $l+km$ or $-l-km$
in $\partial N$. The proof of the lemma is complete.
\end{proof}

\begin{lemma}\label{l.1}
Let $f$ be  a self homeomorphism on $M_k$, then $f$ is isotopic to one of $f_0=id$, $f_1$, $f_2$ and $f_3$. 
\end{lemma}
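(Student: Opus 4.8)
The plan is to combine the two preceding reduction lemmas with the computation of $\mbox{Mod}^+(N)$ in Proposition \ref{p.hZ4}, and then to control the remaining freedom coming from the solid torus $V$. First I would apply Lemma \ref{l.fix} to arrange, up to isotopy, that $f(\omega)=\omega$; as in the proof of Lemma \ref{l.orpr} this lets me assume $f(V)=V$ and $f(N)=N$, where $M_k=V\cup_{\psi_k}N$. By Lemma \ref{l.orpr}, $f$ preserves the orientation of $M_k$, and since $f(N)=N$ the restriction $f|_N$ preserves the orientation of $N$. Hence $[f|_N]\in \mbox{Mod}^+(N)$, and Proposition \ref{p.hZ4} gives some $i\in\{0,1,2,3\}$ with $f|_N$ isotopic to $h_i$ in $N$.

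It then remains to upgrade this boundary-free isotopy on $N$ to an isotopy on all of $M_k$ identifying $f$ with $f_i'$ (recall from Section \ref{s.home} that $f_i'$ is isotopic to $f_i$). I would set $g:=(f_i')^{-1}\circ f$, which again preserves $V$ and $N$ and satisfies $g|_N$ isotopic to $\mathrm{id}_N$ in $N$, since $f_i'|_N=h_i$. The goal becomes to show that $g$ is isotopic to $\mathrm{id}_{M_k}$. Taking the isotopy from $\mathrm{id}_N$ to $g|_N$ and precomposing with $(g|_N)^{-1}$, I obtain an ambient isotopy of $\mathrm{id}_N$ whose time-one map is $(g|_N)^{-1}$; its restriction to $\partial N$ is an isotopy of $\mathrm{id}_{\partial N}$, which I extend across a collar $\partial V\times[0,1]\subset V$ and glue to the isotopy on $N$, obtaining an ambient isotopy $\Phi_s$ of $M_k$. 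After replacing $g$ by $\Phi_1\circ g$ I may therefore assume $g|_N=\mathrm{id}_N$; in particular $g$ fixes $\partial V=\partial N$ pointwise, and $g|_V$ is a self-homeomorphism of the solid torus $V$ restricting to the identity on $\partial V$.

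Finally I would invoke the triviality of the mapping class group of the solid torus relative to its boundary: any homeomorphism of $V$ that is the identity on $\partial V$ is isotopic to $\mathrm{id}_V$ rel $\partial V$. (One isotopes rel $\partial V$ to fix a meridian disk, cuts along it to reduce to a ball with prescribed boundary behaviour, and applies Alexander's trick; there is no meridian-disk twist to worry about, because $\pi_1(\mathrm{Homeo}(D^2,\partial D^2))=0$.) Applying this to $g|_V$ while keeping $g|_N=\mathrm{id}_N$, I conclude that $g$ is isotopic to $\mathrm{id}_{M_k}$, hence $f$ is isotopic to $f_i'$ and thus to $f_i$.

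The main obstacle is exactly this last passage from $N$ to $M_k$: Proposition \ref{p.hZ4} only controls $f$ up to isotopy on the knot exterior, and one must check that no new isotopy class is created when extending over $V$. The two ingredients that make this work are the compatibility of $h_i$ with the gluing (up to isotopy $h_i$ sends $m,l$ to $\pm m,\pm l$ and hence preserves the filling slope $\psi_k(m_v)=l+km$, which is precisely what allowed $f_i'$ to be defined in Section \ref{s.home}), together with the vanishing of $\mbox{Mod}(V,\partial V)$. I would be careful to check that the collar extension of the boundary isotopy is consistent across $\partial N=\partial V$, so that the pieces genuinely patch to a global ambient isotopy of $M_k$.
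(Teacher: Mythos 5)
Your proof is correct and follows essentially the same route as the paper: the same reduction via Lemma \ref{l.fix}, Lemma \ref{l.orpr} and Proposition \ref{p.hZ4} to the auxiliary map $g=(f_i')^{-1}\circ f$ with $g|_N$ isotopic to $\mathrm{id}_N$, followed by the same collar-damping extension of an isotopy of $N$ into the solid torus $V$. The only difference is organizational: you normalize $g|_N=\mathrm{id}_N$ and then kill $g|_V$ by the triviality of the mapping class group of the solid torus rel boundary, which is exactly the paper's Lemma \ref{l.ale} (proved there by the same innermost-disk plus Alexander-trick argument you sketch), whereas the paper normalizes $g|_V=\mathrm{id}_V$ and leaves the final comparison of $g$ with the damped extension $F_1'$ as ``easy to check''---so your explicit appeal to that solid-torus fact in effect fills in the step the paper elides.
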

\begin{proof}
By Lemma \ref{l.fix} and  Lemma \ref{l.orpr}, $f$ is oriention-preserving and up to isotopy, we can assume that $f(V,\omega) =(V,\omega)$ and $f(N)=N$.
Then by Proposition \ref{p.hZ4}, $f\mid_N$ is isotopic to some $h_i=f_i'\mid_N$.
Define $g:= (f_i')^{-1}\circ f$ that is a homeomorphism on $M_k$ such that
$g(V,\omega) =(V,\omega)$, $g(N)=N$ and $g\mid_N$ is isotopic to $f_0\mid_N=id_N$.
Since $f_i$ and $f_i'$ are isotopic (see Section \ref{s.home}), to complete the proof of this lemma, we are left to show that $g$ is isotopic to $f_0=id$ on $M_k$. From now on, we focus on showing
this.

Since $g:N \to N$ is isotopic to $f_0\mid_N=id_N$, then up to isotopy, we can assume $g(m)=m$ and $g(l)=l$. This fact permits us to assume that $g\mid_V =id_V$ up to isotopy. Let $V_1$ be a compact tubular neighborhood of $\omega$ in the interior of $V$.
Let $U$ be the cloure of $V-V_1$ that can be paramerized by $\TT^2 \times [0,1]$ such that $\TT^2 \times \{0\}$ and $\TT^2 \times \{1\}$ correspond to $\partial N =\partial V$ and $\partial V_1$ respectively. 

Let $F_t: N\to N$ ($t\in[0,1]$) be an isotopy such that $F_0 =id_N$ and $F_1 =g\mid_N$.
$F_t$ can be extended to an isotopy $F_t'$ on $M_k$ such that
\begin{enumerate}
\item
$F_t'=F_t$ on $N$, 
\item $F_t'(x,s)=(F_{t(1-s)}(x),s)$ on $U$ where $x\in \TT^2$ and $s\in [0,1]$,
\item $F_t' =id$ on $V_1$.
\end{enumerate}
One can automatically check that $F_t'$ is an isotopy between
$F_0' =f_0$ and $F_1'$ on $M_k$. Furthermore, it is easy to check that $F_1'$ and 
$g$ are isotopic on $M_k$. Therefore, $g$ is isotopic to $f_0=id$ on $M_k$. The proof 
of the lemma is complete.
\end{proof}

\begin{lemma}\label{l.2}
$f_0$ and $f_2$ are two self orbit equivalences of $X_t$, and  $F_1:=\eta\circ f_1$ and $F_3:=\eta\circ f_3$
are two self orbit equivalences of $X_t$. Moreover, $F_1$ is isotopic to $f_1$ and $F_3$ is isotopic to $f_3$. 
\end{lemma}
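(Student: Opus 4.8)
The plan is to read off the first assertion directly from earlier work and then obtain $F_1,F_3$ by a composition-of-orbit-equivalences argument, closing with a short isotopy argument for the ``moreover'' clause. First I would note that the claim that $f_0$ and $f_2$ are self orbit equivalences of $X_t$ is nothing new: it is precisely case (1) of Lemma~\ref{l.fi}. Hence the genuine content of the lemma concerns the maps $F_1$ and $F_3$.

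For the main step, recall that $X_t$ is skew $\RR$-covered (Fenley \cite{Fen}), so the half-step-up map $\eta$ of Section~\ref{s.Skew} is available and, as recalled there, induces an orbit equivalence between $X_t$ and $X_{-t}$. The observation I would use is that an orbit equivalence between $X_t$ and $X_{-t}$ is exactly a homeomorphism carrying the oriented flow lines of $X_t$ to the oriented flow lines of $X_{-t}$, i.e.\ one that reverses the flow direction; and this relation is symmetric, so $\eta$ is equally an orbit equivalence from $X_{-t}$ to $X_t$. By case (2) of Lemma~\ref{l.fi}, $f_1$ and $f_3$ are orbit equivalences between $X_t$ and $X_{-t}$. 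Composing two flow-reversing orbit equivalences produces a flow-preserving homeomorphism, so $F_1=\eta\circ f_1$ and $F_3=\eta\circ f_3$ are self orbit equivalences of $X_t$; this is exactly the mechanism by which $\eta^2$ is a self orbit equivalence in Section~\ref{s.Skew}.

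For the ``moreover'' clause I would invoke Remark~\ref{r.etaid}: since $M_k$ is a closed hyperbolic $3$-manifold, $\eta$ is isotopic to $\mathrm{id}$ on $M_k$. Choosing an isotopy $\eta_s$ with $\eta_0=\eta$ and $\eta_1=\mathrm{id}$, and postcomposing on the right by $f_1$ (resp.\ $f_3$), gives the isotopy $s\mapsto \eta_s\circ f_1$ from $F_1=\eta\circ f_1$ to $f_1$ (resp.\ from $F_3$ to $f_3$). This settles the last two statements.

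The only point requiring care — and what I would flag as the main, though mild, obstacle — is the orientation bookkeeping: one must verify cleanly that the composite of two orbit equivalences, each sending $X_t$ to $X_{-t}$, genuinely respects the \emph{oriented} orbit foliation of $X_t$, so that $F_1$ and $F_3$ are self orbit equivalences and not merely orbit equivalences to $X_{-t}$, and that the induced reparametrizations of the flow compose correctly. This is guaranteed by the symmetry of the $X_t$–$X_{-t}$ orbit-equivalence relation noted above, and is already implicit in the established fact that $\eta^2$ is a self orbit equivalence of $X_t$.
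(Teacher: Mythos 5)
Your proposal is correct and follows essentially the same route as the paper: cite Lemma~\ref{l.fi} for $f_0,f_2$ and for $f_1,f_3$ being orbit equivalences between $X_t$ and $X_{-t}$, compose with the flow-reversing orbit equivalence $\eta$ from Section~\ref{s.Skew} to get the self orbit equivalences $F_1,F_3$, and use Remark~\ref{r.etaid} ($\eta$ isotopic to $\mathrm{id}$ on the hyperbolic manifold $M_k$) for the ``moreover'' clause. Your explicit justification of why the composite of two flow-reversing orbit equivalences preserves the oriented orbit foliation is a point the paper leaves implicit, but it is the same argument.
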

\begin{proof}
By Lemma \ref{l.fi}, each of $f_0$ and $f_2$ is a self orbit equivalence of $X_t$,
 and  each of $f_1$ and $f_3$ is a self orbit equivalence between $X_t$ and $X_{-t}$.
By Section \ref{s.Skew}, $\eta$ is also an orbit equivalence between $X_t$ and $X_{-t}$.
Therefore, each of $F_1=\eta\circ f_1$ and $F_3=\eta\circ f_3$ is a self orbit equivalence of $X_t$. Morever, notice that $\eta$ is isotopic to $f_0=id$ on $M_k$ (see Remark \ref{r.etaid}), therefore $F_1$ is isotopic to $f_1$ and $F_3$ is isotopic to $f_3$. 
\end{proof}

\begin{lemma}\label{l.3}
Every $f_i$ ($i=1,2,3$) is not isotopic to $f_0= id$. 
\end{lemma}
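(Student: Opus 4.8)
The plan is to split the statement into the two flow-reversing cases $i=1,3$ and the flow-preserving case $i=2$, and to use the behaviour of each $f_i$ on the core periodic orbit $\omega$ of $X_t$ as the main lever, feeding it into Proposition \ref{p.eta} and Theorem \ref{t.BG}. First I would record how the three maps move $\omega$. Since $B_1$ and $B_3$ fix the origin of $\TT^2$ while $g_1$ and $g_3$ send the suspension parameter $t$ to $1-t$, each of $g_1,g_3$ preserves $\omega$ setwise but reverses the flow direction along it; passing through the surgery this gives $f_1(\omega)=-\omega$ and $f_3(\omega)=-\omega$, where $-\omega$ denotes $\omega$ with the opposite orientation. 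By contrast $g_2$ fixes the origin and preserves $t$, so $f_2(\omega)=\omega$ as an \emph{oriented} orbit. The orbit-preserving correction permitted in Lemma \ref{l.fi} maps each orbit to itself preserving its orientation, so it does not disturb any of these identities.

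For $i=1,3$ I would argue by contradiction. If $f_i$ were isotopic to $id$, then $\omega$ and $f_i(\omega)=-\omega$ would be freely homotopic as oriented curves in $M_k$. But $X_t$ is skew $\RR$-covered, so by Proposition \ref{p.eta} the oriented orbits freely homotopic to $\omega$ are exactly the $\eta^{2k}(\omega)$, while those freely homotopic to $-\omega=\omega^{-1}$ are exactly the $\eta^{2k+1}(\omega)$. Hence $\omega\sim\omega^{-1}$ would force $\omega=\eta^{2k+1}(\omega)$ for some $k$, and applying $\eta^{2k+1}$ once more would yield $\eta^{4k+2}(\omega)=\omega$ with $4k+2\neq 0$, contradicting the distinctness of the even iterates guaranteed by Proposition \ref{p.eta} in the hyperbolic case. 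Thus $f_1$ and $f_3$ are not isotopic to $id$.

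The case $i=2$ is the main one, and here I would invoke Theorem \ref{t.BG}. From the proof of Proposition \ref{p.hZ4} we have two distinct periodic orbits $\beta_1',\beta_2'$ of $Z_t$ with $h_2(\beta_1')=\beta_2'$; since these lie in the interior of $N$, away from the pinched boundary, the pinching map $\pi$ sends them to two distinct periodic orbits $\gamma_1,\gamma_2$ of $X_t$ with $f_2(\gamma_1)=\gamma_2$. Now suppose $f_2$ is isotopic to $id$. As $f_2$ is a self orbit equivalence of the skew $\RR$-covered flow $X_t$, Theorem \ref{t.BG} yields a unique integer $k_0$ with $f_2\circ\eta^{-2k_0}$ preserving every orbit. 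Evaluating this on $\omega$ and using $f_2(\omega)=\omega$ gives $\eta^{-2k_0}(\omega)=\omega$, so $k_0=0$ by the distinctness in Proposition \ref{p.eta}. Hence $f_2$ itself preserves every orbit, forcing $f_2(\gamma_1)=\gamma_1$, which contradicts $f_2(\gamma_1)=\gamma_2\neq\gamma_1$. Therefore $f_2$ is not isotopic to $id$ either.

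I expect the main obstacle to be the bookkeeping in the $i=2$ case rather than the logic: one must check that, after the a priori undetermined orbit-preserving correction allowed in Lemma \ref{l.fi}, the concrete facts $f_2(\omega)=\omega$ and $f_2(\gamma_1)=\gamma_2$ with $\gamma_1\neq\gamma_2$ genuinely survive the passage through $\pi$. Once these are pinned down, the role of $\omega$ is to force $k_0=0$ in Theorem \ref{t.BG} while the role of $\gamma_1$ is to exhibit an orbit that $f_2$ actually moves, and the two facts are incompatible. The $i=1,3$ cases are easier because they are settled purely by Proposition \ref{p.eta} via the observation that a periodic orbit of a skew $\RR$-covered flow on a hyperbolic manifold is never freely homotopic to its own inverse.
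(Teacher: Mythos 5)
Your proposal is correct, and its case $i=2$ is essentially verbatim the paper's argument: invoke Theorem \ref{t.BG} to get a unique $k_0$ with $f_2\circ\eta^{-2k_0}$ orbit-preserving, use $f_2(\omega)=\omega$ together with the distinctness clause of Proposition \ref{p.eta} to force $k_0=0$, and then contradict orbit-preservation with the pair of orbits coming from the period-three orbits of $A$ (the paper's $\beta_1,\beta_2$, your $\gamma_1,\gamma_2$) satisfying $f_2(\beta_1)=\beta_2\neq\beta_1$. Where you genuinely diverge is in the flow-reversing cases $i=1,3$: the paper first passes to the self orbit equivalence $F_i=\eta\circ f_i$ (using that $\eta$ is isotopic to $id$, Remark \ref{r.etaid}), applies Theorem \ref{t.BG} to $F_i$ to produce an odd power $\eta^{2k+1}$ with $\eta^{2k+1}f_i(\omega)=\omega$, and only then derives $\eta^{2(2k+1)}(\omega)=\omega$; you instead observe directly that an isotopy from $f_i$ to $id$ makes $\omega$ freely homotopic to $f_i(\omega)=\omega^{-1}$, so Proposition \ref{p.eta} alone forces $\omega=\eta^{2k+1}(\omega)$ and hence $\eta^{4k+2}(\omega)=\omega$, the same contradiction. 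Your route for $i=1,3$ is more elementary --- it needs neither Theorem \ref{t.BG} nor the composition with $\eta$, only the fact that isotopic homeomorphisms carry a curve to a freely homotopic curve --- and it isolates a clean statement (a periodic orbit of a skew $\RR$-covered Anosov flow on a hyperbolic $3$-manifold is never freely homotopic to its inverse); the paper's route buys uniformity, treating all three cases by the same mechanism of Theorem \ref{t.BG}. Your attention to the orbit-preserving correction in Lemma \ref{l.fi} (checking it does not disturb $f_2(\omega)=\omega$, $f_i(\omega)=\omega^{-1}$, or $f_2(\gamma_1)=\gamma_2$) is a point the paper leaves implicit, and it is handled correctly.
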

\begin{proof}
Firstly we prove the lemma in the case $i=2$.
Assume by contradiction that $f_2$ is isotopic to $f_0= id$, then by Theorem \ref{t.BG}, there is a 
unique $k\in \ZZ$ such that $h=f_2\circ \eta^{-2k}$ is a self orbit equivalence of $X_t$
such that $h$ preserves each orbit of $X_t$. Moreover, by Proposition \ref{p.eta}, for every $i\in \ZZ\setminus \{0\}$,
$h\circ \eta^{2i}$ does not preserve each periodic orbit of $X_t$. Notice that
$f_2 (\omega)=\omega$, therefore $k=0$ and $h=f_2$. This means that $f_2$ must preserve
each periodic orbit of $X_t$. 

Similar to the proof of Lemma \ref{l.gZ4}, $\{(\frac{1}{4},0), (\frac{1}{2},\frac{1}{4}), (\frac{1}{4}, \frac{3}{4})\}$
and $\{(\frac{3}{4},0),$ $ (\frac{1}{2},\frac{3}{4}), (\frac{3}{4}, \frac{1}{4})\}$ are two periodic orbits of $A$ which
correspond to two different periodic orbits $\beta_1$ and $\beta_2$ of $X_t$.
Since  $\left(\begin{array}{ccc}
                                         \frac{3}{4} \\
                                         0 
                                       \end{array}
                                     \right)=B_2\left(\begin{array}{ccc}
                                         \frac{1}{4}  \\
                                         0 
                                       \end{array}
                                     \right)$ on $\TT^2$, 
 therefore $f_2 (\beta_1)=\beta_2$.
This conflicts to the fact that $f_2$ must preserve
each periodic orbit of $X_t$.  Then, the assumption is not correct and therefore
$f_2$ is not isotopic to $f_0= id$. 

Now we prove the lemma in the case $i=1$.
By Lemma \ref{l.fi},  $f_1$  is a self orbit equivalence between $X_t$ and $X_{-t}$.
By the construction, $f_1(\omega) =\omega^{-1}$. Assume that $f_1$ is isotopic to $f_0=id$, then the self orbit equivalence $F_1=\eta f_1$ of $X_t$ is also isotopic to
$f_0=id$. Then by Theorem \ref{t.BG}, there is a 
unique $k\in \ZZ$ such that $h= \eta^{2k}F_1= \eta^{2k+1}f_1$ preserves each periodic orbit of $X_t$. In particular $\eta^{2k+1}f_1 (\omega)=\omega$. Recall that $f_1(\omega) =\omega^{-1}$, then  $\eta^{2k+1}(\omega)=\omega^{-1}$, and therefore 
$\eta^{2(2k+1)}(\omega)=\omega$. Since $2k+1\neq 0$, due to Proposition \ref{p.eta}, $\eta^{2(2k+1)}$ does not  preserve each periodic orbit of $X_t$. Then we get a contradiction and 
therefore
$f_1$ is not isotopic to $f_0= id$. 
One can prove the lemma in the case $i=3$ in the same way to the case that $i=1$.
The proof of the lemma is complete.
\end{proof}

The result of the following lemma should be classical, but we have not found
a related reference, so we give a proof here.

\begin{lemma}\label{l.ale}
Let $f$ be a homeomorphism on the solid torus $V$ such that $f\mid_{\partial V}=id\mid_{\partial V}$, then $f$ is isotopic to $id$ on $V$ relative to $\partial V$.
\end{lemma}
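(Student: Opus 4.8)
The plan is to reduce the statement to the Alexander trick on a ball, after first arranging that $f$ restricts to the identity on a meridian disk. Parametrize $V = D^2 \times S^1$ and let $D = D^2 \times \{p\}$ be a meridian disk, so that $\partial D \subset \partial V$ is a meridian curve. Since $f|_{\partial V} = \mathrm{id}$, the image $f(D)$ is a properly embedded disk in $V$ whose boundary is exactly $\partial D$; in other words $f(D)$ is again a meridian disk with the same boundary as $D$.

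First I would isotope $f$, rel $\partial V$, so that $f(D) = D$. Putting $f(D)$ in general position with respect to $D$, the intersection $f(D) \cap D$ consists of $\partial D$ together with finitely many disjoint circles in the interior of $V$. Using that the solid torus is irreducible, an innermost-circle argument removes all of these interior circles by an isotopy of $f(D)$ supported in the interior (hence rel $\partial V$), after which $f(D)$ and $D$ meet only along their common boundary $\partial D$. The classical uniqueness of the meridian disk then says that two meridian disks with the same boundary curve are isotopic rel that boundary, so a further ambient isotopy rel $\partial V$ carries $f(D)$ onto $D$; composing $f$ with this isotopy I may assume $f(D) = D$. This is the step I expect to be the main obstacle, since it is where the three-dimensional topology (irreducibility and the essential uniqueness of the meridian disk) really enters; everything afterward is essentially two applications of coning.

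Next I would arrange $f|_D = \mathrm{id}_D$. Now $f|_D$ is a self-homeomorphism of the disk $D$ fixing $\partial D$ pointwise, so by the two-dimensional Alexander trick it is isotopic to $\mathrm{id}_D$ rel $\partial D$. Choosing a collar $D \times [-1,1]$ of $D$ in $V$ and realizing the isotopy inside this collar, I extend it to an ambient isotopy of $V$ that is the identity outside the collar, hence rel $\partial V$, and thereby reduce to the case $f|_{D} = \mathrm{id}_D$ while retaining $f|_{\partial V} = \mathrm{id}$.

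Finally I would apply the Alexander trick in dimension three. Cutting $V$ along $D$ produces a three-ball $B$, and since $f$ now fixes $D$ and $\partial V$ pointwise it induces a self-homeomorphism $\hat f$ of $B$ that is the identity on the entire boundary sphere $\partial B$. By the Alexander trick, $\hat f$ is isotopic to $\mathrm{id}_B$ rel $\partial B$; this isotopy fixes the two copies of $D$ throughout, so it descends to an isotopy of $V$ rel $\partial V$ from $f$ to $\mathrm{id}_V$. Composing the isotopies produced in the three steps yields the desired isotopy from the original $f$ to $\mathrm{id}_V$ relative to $\partial V$, completing the proof.
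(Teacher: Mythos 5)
Your proposal is correct and follows essentially the same route as the paper's proof: put $f(D)$ in general position, remove interior intersection circles via irreducibility and an innermost argument, isotope $f(D)$ onto $D$ (the paper justifies this step by observing that $f(D)\cup D$ is a sphere bounding a ball, which is precisely the uniqueness of the meridian disk you invoke), then apply the two-dimensional Alexander trick in a collar of $D$ and finally the three-dimensional Alexander trick on the ball obtained by cutting $V$ along $D$.
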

\begin{proof}
Let $m$ be a meridian circle in $\partial V$ that bounds a meridian disk
$D$ in $V$. By an isotopy relative to $\partial V$ if necessary, we can
assume that the interior of $f(D)$ intersects with $D$ at finitely many pairwise simple closed curves. Notice that $V$ is an irreducible $3$-manifold, then by a standard `inner most disk' combinatorial topology trick, by a further isotopy relative to $\partial V$  if necessary, we can assume that the interior of $f(D)$ is disjoint  with $D$. Therefore $S=f(D)\cup D$ is a two sphere in $V$ such that $S\cap V =m$ and $S$ bounds a three ball in $V$.  Then after an isotopy relative to $\partial V$, we can assume that $f(D)=D$.

Since $f\mid_{\partial D} = id_{\partial D}$, then by using two dimentional Alexander trick,
$f\mid_{\partial D}$  is isotopic to $id_{\partial D}$  relative to $\partial D$ by an isototy $h_t$ on $D$ such that $h_0=id_D$ and $h_1 =(f\mid_D)^{-1}$. Find
a small compact neighborhood $U(D)$ of $D$ such that $U(D)\cap \partial V =m$ and
$U(D)$ can be parameterized by $D\times [-1,1]$ modular $(x,s_1)=(x,s_2)$ ($s_1,s_2 \in [-1,1]$). See Figure \ref{f.DUDV} as an illustration.  Define an isotopy $H_t$ on $V$ by $H_t (y)=y$ if $y\notin U(D)$ and
$H_t (x,s)= (h_{(1-|s|)t}(x),s)$  ($x\in D$ and $s\in [-1,1]$).
Define $G_t:=H_t \circ f$. Then one can automatically check that $G_0=f$ and
$(G_1)\mid_{\partial V \cup D} =id_{\partial V \cup D}$, and $G_0=f$ and $G_1$
are isotopic relative to $\partial V$. 
Then by using three dimentional Alexander trick on the three ball which is the path closure of $V\setminus (\partial V \cup D)$, we have that $G_1$ is isotopic to $id_V$  relative to $\partial V$. Therefore, $f=G_0$ is isotopic to $id$ on $V$ relative to $\partial V$.
\end{proof}

\begin{figure}[htp]
\begin{center}
  \includegraphics[totalheight=4.8cm]{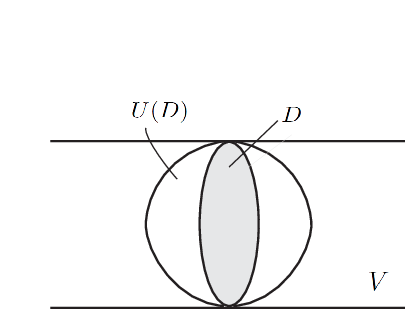}\\
  \caption{Some notations in the proof of Lemma \ref{l.ale}}\label{f.DUDV}
\end{center}
\end{figure}

The conclusion in the following lemma is elementary but useful.

\begin{lemma}\label{1.extiso}
Let $\phi_0$ and $\phi_1$ be two homeomorphisms on $M_k$
such that
\begin{enumerate}
\item $\phi_i (N)=N$ and $\phi_i (V)=V$ for $i=1,2$;
\item  $(\phi_0)\mid_N$ and $(\phi_1)\mid_N$ are isotopic, and 
$(\phi_0)\mid_V$ and $(\phi_1)\mid_V$ are isotopic.
\end{enumerate} 
Then $\phi_0$ and $\phi_1$ are isotopic on $M_k$.
\end{lemma}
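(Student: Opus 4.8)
The plan is to reduce to showing that a single homeomorphism close to the identity is isotopic to the identity, and then to split the argument across the two pieces $N$ and $V$: handle the $N$-side by a collar-damping isotopy and the $V$-side by Lemma \ref{l.ale}. Set $g := \phi_1^{-1}\circ \phi_0$. Then $g$ is a self homeomorphism of $M_k$ with $g(N)=N$ and $g(V)=V$, and by hypothesis (2) the restriction $g\mid_N=(\phi_1\mid_N)^{-1}\circ(\phi_0\mid_N)$ is isotopic to $id_N$ while $g\mid_V$ is isotopic to $id_V$. Since $\phi_0=\phi_1\circ g$, it suffices to prove that $g$ is isotopic to $id_{M_k}$; then $\phi_0$ and $\phi_1$ will be isotopic.

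First I would make $g$ literally the identity on $N$ by an ambient isotopy of $M_k$, using exactly the collar trick already employed in the proof of Lemma \ref{l.1}. Choose an isotopy $G_t\colon N\to N$ with $G_0=id_N$ and $G_1=g\mid_N$, write $T=\partial N=\partial V$, and fix a collar $C\cong T\times[0,1]\subset V$ with $T\times\{0\}=\partial V$. Extend $G_t$ to an isotopy $\tilde G_t$ of $M_k$ by setting $\tilde G_t=G_t$ on $N$, $\tilde G_t(x,s)=(G_{t(1-s)}\mid_T(x),s)$ on $C$, and $\tilde G_t=id$ on the rest of $V$; one checks this is well defined and continuous along $T$ and along $T\times\{1\}$. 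Writing $h:=\tilde G_1$, we have $h$ isotopic to $id_{M_k}$ and $h\mid_N=g\mid_N$, so $h^{-1}\circ g$ is isotopic to $g$ and restricts to $id_N$ on $N$. Replacing $g$ by $h^{-1}\circ g$, I may therefore assume $g\mid_N=id_N$; in particular $g\mid_{\partial V}=id_{\partial V}$.

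It now remains to treat the solid torus. Since $g\mid_N=id_N$, the map $g\mid_V$ is a homeomorphism of the solid torus $V$ with $g\mid_{\partial V}=id_{\partial V}$, so Lemma \ref{l.ale} provides an isotopy from $g\mid_V$ to $id_V$ relative to $\partial V$. Because this isotopy fixes $\partial V$ pointwise throughout, it extends by $id_N$ on $N$ to an isotopy of $M_k$ from $g$ to $id_{M_k}$. Hence $g$ is isotopic to $id_{M_k}$ and $\phi_0$ is isotopic to $\phi_1$, as desired.

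The one genuinely delicate point is the matching of the two isotopies along the common torus $T$: the isotopies supplied by hypothesis (2) on $N$ and on $V$ need not agree on $T$, so one cannot simply glue them. The device that circumvents this is to use only the $N$-isotopy --- damped to the identity through a collar in $V$, which is possible regardless of its behavior on $T$ --- in order to rigidify the map to $id_N$ on $N$, and hence to $id_{\partial V}$ on the boundary torus; after this the solid-torus side is fixed on $\partial V$ and Lemma \ref{l.ale} finishes the job relative to the boundary. I note in passing that the hypothesis on $V$ in statement (2) is then not strictly used, since Lemma \ref{l.ale} requires only that $g\mid_V$ be the identity on $\partial V$, a property the first step already guarantees.
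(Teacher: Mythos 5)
Your proof is correct, and it is close in spirit to the paper's but differs in a way worth recording. The paper uses both halves of hypothesis (2): it fixes isotopies $F_t$ on $N$ (from $\phi_0\mid_N$ to $\phi_1\mid_N$) and $G_t$ on $V$ (from $\phi_0\mid_V$ to $\phi_1\mid_V$), measures their failure to match along the gluing torus by the loop $h_t:=(\psi_k)^{-1}F_t\psi_k (G_t)^{-1}$ of homeomorphisms of $\partial V$ (which satisfies $h_0=h_1=id$), damps this loop into a collar of $\partial V$ inside $V$ to correct $G_t$, and then glues $F_t$ with the corrected isotopy $H_tG_t$ to carry $\phi_0$ to a homeomorphism $\phi_1'$ that agrees with $\phi_1$ on $N$; the endgame --- apply Lemma \ref{l.ale} to $\phi_1^{-1}\phi_1'$, which is the identity on $N$ and hence on $\partial V$ --- is exactly yours. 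You instead compose first, setting $g=\phi_1^{-1}\phi_0$, and use only the $N$-isotopy, damped to the identity through a collar in $V$ (the same device as in the proof of Lemma \ref{l.1}), which sidesteps the boundary-matching problem rather than solving it: there is no need for the correction loop $h_t$ or for conjugating by $\psi_k$. What your route buys is both brevity and a slightly stronger statement: as you observe, the hypothesis that $(\phi_0)\mid_V$ and $(\phi_1)\mid_V$ be isotopic is redundant, since once the map is rigidified to the identity on $N$, Lemma \ref{l.ale} applies with no a priori information about the $V$-side. What the paper's route buys is a symmetric treatment of the two pieces, and its correction-loop technique is the one that generalizes to gluings along a surface where neither side is a solid torus (i.e., where no analogue of Lemma \ref{l.ale} is available and one genuinely must match the two given isotopies). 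Your verification details (continuity of the collar extension at both ends, that $g(V)=V$ and $g\mid_{\partial V}=id$ after rigidification, and that the rel-boundary isotopy on $V$ extends by $id_N$) are all sound.
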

\begin{proof}
Let $F_t$ ($t\in[0,1]$) be an isotopy on $N$ such that $F_0=(\phi_0)\mid_N$ and 
$F_1=(\phi_1)\mid_N$.
Let $G_t$ ($t\in[0,1]$) be an isotopy on $V$ such that $G_0=(\phi_0)\mid_V$ and 
$G_1=(\phi_1)\mid_V$.
Obviously, for $i=0$ or $i=1$, $(\phi_i)\mid_{\partial N}= \psi_k (\phi_i)\mid_{\partial V} (\psi_k)^{-1}$.
Define $h_t:=(\psi_k)^{-1} F_t \psi_k (G_t)^{-1}: \partial V \to \partial V$.
It can be automatically checked that $h_t$ is an isotopy on $\partial V$ such that
$h_0=h_1= id\mid_{\partial V}$.

Now we paramterize a small closed collar neighborhood $U(\partial V)$ of $\partial V$ in $V$ by $\partial V \times [0,1]$ such that $\partial V \times \{1\}$ identifies to $\partial V$. Let $V_0$ be the closure of $V\setminus U(\partial V)$. Define an isotopy $H_t$ on $V$ by  $H_t=id_{V_0}$ on $V_0$ and $H_t(s,x)= (s,h_{st}(x))$ ($s,t\in[0,1]$) on $U(\partial V)$. It is easy to check that $H_0=id$ on $V$ and $(H_1)\mid_{\partial V}=id\mid_{\partial V}$. 

Then $H_t G_t$ is an isotopy on $V$ such that 
$(H_t G_t)\mid_{\partial V}= h_t G_t = (\psi_k)^{-1} F_t \psi_k$
and $H_0 G_0= G_0=(\phi_0)\mid_V$ and $(H_1 G_1)_{\partial V} =(G_1)_V=(\phi_1)_V$. Then we can define an isotopy $K_t$ on $M_k$ by $K_t=F_t$ on $N$ and $K_t=H_t G_t$ on $V$
between $\phi_0$ and $\phi_1'$, where $\phi_1' = \phi_1$ on $N$ and $\phi_1'=H_1 G_1$
on $V$. 

Define a homeomorphism $g:= \phi_1^{-1} \phi_1'$ on $M_k$. Then $g\mid_N =id_N$, and
by Lemma \ref{l.ale},  it is easy to get that $g$ is isotopic to $id$ on $M_k$.
Equivalently, $\phi_1'$ is isotopic to $\phi_1$. 
Thereroe, $\phi_0$ and $\phi_1$ are isotopic on $M_k$.
\end{proof}

\begin{lemma}\label{l.4}
$[f_i]^2 =[f_0]=[id]$ ($i=1,2,3$) and $[f_i] [f_j] =[f_j] [f_i] =[f_k]$ where $(i,j,k)$ is any permutation of $(1,2,3)$ and $f_0 =id$. Here $[f_i]$ represents the isotopy class of $f_i$. 
\end{lemma}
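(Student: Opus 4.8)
The plan is to transfer every relation to the extensions $f_i'$ and then apply the gluing criterion of Lemma \ref{1.extiso}. Since $f_i$ is isotopic to $f_i'$ (Section \ref{s.home}), it suffices to prove $[f_i']^2=[f_0']$ and $[f_i'][f_j']=[f_k']$. By construction each $f_i'$ preserves the decomposition $M_k=N\cup V$, i.e. $f_i'(N)=N$ and $f_i'(V)=V$, so the compositions $f_i'\circ f_j'$ and $f_i'\circ f_i'$ do as well. Thus each desired relation is of the form ``two homeomorphisms of $M_k$ preserving $N$ and $V$ agree up to isotopy'', and Lemma \ref{1.extiso} reduces it to two independent checks, one on $N$ and one on $V$.

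The check on $N$ is immediate. Since $f_i'|_N=h_i$, we have $(f_i'f_j')|_N=h_ih_j$ and $(f_i'^2)|_N=h_i^2$. By the exact relations established in Proposition \ref{p.hZ4}, namely $h_ih_j=h_k$ and $h_i^2=h_0=id_N$ for any permutation $(i,j,k)$ of $(1,2,3)$, we get $(f_i'f_j')|_N=f_k'|_N$ and $(f_i'^2)|_N=f_0'|_N$ on the nose, hence in particular they are isotopic on $N$.

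For the check on $V$ I would invoke the classical fact that an orientation-preserving self-homeomorphism of a solid torus is determined, up to free isotopy, by the automorphism it induces on $H_1(V)\cong\ZZ$, equivalently by whether it preserves or reverses the oriented core $\omega$. If a self-contained argument is preferred, this follows from Lemma \ref{l.ale}: after normalizing the action on $\partial V$ using that a meridional Dehn twist is isotopic to the identity in $V$ and that $\mbox{Mod}(\partial V)\cong GL_2(\ZZ)$, one reduces to a boundary-fixing map and applies the Alexander trick. Each $f_i'$ is orientation-preserving on $M_k$ (it is orientation-preserving on the interior of $N$ because $h_i\in\mbox{Mod}^+(N)$) and preserves $V$, so $f_i'|_V$ is orientation-preserving on $V$. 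Its $H_1(V)$-action is read off from its effect on $\omega$: $f_0'$ and $f_2'$ preserve $\omega$ with its orientation (they arise from self orbit equivalences, and $B_2=-I$ fixes $O$ while $g_2$ preserves the time coordinate), so they act by $+1$; whereas $f_1'(\omega)=\omega^{-1}$ and $f_3'(\omega)=\omega^{-1}$ (they arise from orbit equivalences between $X_t$ and $X_{-t}$, cf. the proof of Lemma \ref{l.3}), so they act by $-1$.

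With these signs in hand the $V$-checks are a bookkeeping of $\pm1$. For a permutation $(i,j,k)$ of $(1,2,3)$ the core-action of $f_i'|_V\circ f_j'|_V$ is the product of the two signs, and this matches the sign of $f_k'|_V$: concretely $(-1)(+1)=-1$, $(+1)(-1)=-1$ and $(-1)(-1)=+1$, agreeing with $f_3'$, $f_1'$ and $f_2'$ respectively; hence $(f_i'f_j')|_V$ is isotopic to $f_k'|_V$. Likewise $(f_i'^2)|_V$ has core-action $(\pm1)^2=+1$, so it is isotopic to $id_V=f_0'|_V$. Lemma \ref{1.extiso} then gives $f_i'f_j'$ isotopic to $f_k'$ and $f_i'^2$ isotopic to $f_0'$ on $M_k$; commutativity $[f_i][f_j]=[f_j][f_i]$ is automatic since both sides equal $[f_k]$. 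I expect the $V$-part to be the main obstacle: one must pin down the isotopy classification of orientation-preserving homeomorphisms of the solid torus and correctly track the core orientations, while the $N$-part and the assembly are handled directly by Proposition \ref{p.hZ4} and Lemma \ref{1.extiso}.
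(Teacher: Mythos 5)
Your overall architecture---replacing $f_i$ by $f_i'$, using the exact relations $h_ih_j=h_k$ and $h_i^2=h_0$ from Proposition \ref{p.hZ4} to handle the $N$-side, and assembling with Lemma \ref{1.extiso}---is the same as the paper's. The gap is in your check on $V$. The ``classical fact'' you invoke is false: an orientation-preserving self-homeomorphism of a solid torus is \emph{not} determined up to free isotopy by its action on $H_1(V)\cong \ZZ$, nor by its effect on the oriented core. The counterexample is the meridional Dehn twist $\tau(e^{i\theta},z)=(e^{i\theta},e^{i\theta}z)$ of $V=S^1\times D^2$: it is orientation preserving and fixes the core pointwise, yet it is not isotopic to $id_V$, because any free isotopy of $V$ restricts to an isotopy of $\partial V$, and $\tau\mid_{\partial V}$ is the Dehn twist along the meridian, which acts nontrivially on $H_1(\partial V)$. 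For the same reason your back-up claim that ``a meridional Dehn twist is isotopic to the identity in $V$'' is also wrong: $\tau$ is \emph{homotopic} to $id_V$ (as $V$ is a $K(\ZZ,1)$) but not isotopic to it, and in fact $\mbox{Mod}(V)$ is infinite (it injects into $\mbox{Mod}(\partial V)\cong GL_2(\ZZ)$ with image the stabilizer of the meridian slope, which contains the infinite cyclic group of twist powers). So your sign bookkeeping does not exclude the possibility that $(f_i'f_j')\mid_V$ and $f_k'\mid_V$ differ by a power of the meridional twist, and the $V$-hypothesis of Lemma \ref{1.extiso} is not verified.

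The repair is available inside your own setup, and it is what the paper does. You established the relations on $N$ as exact equalities, not merely isotopies: $(f_i'f_j')\mid_N=f_k'\mid_N$ and $(f_i')^2\mid_N=id_N$. Hence the restrictions to $\partial N=\partial V$ agree on the nose, so $\bigl((f_k')^{-1}f_i'f_j'\bigr)\mid_V$ (resp.\ $(f_i')^2\mid_V$) is a homeomorphism of the solid torus which is the identity on $\partial V$. Lemma \ref{l.ale} then gives an isotopy to $id_V$ rel $\partial V$, i.e.\ $(f_i'f_j')\mid_V$ is isotopic to $f_k'\mid_V$ (resp.\ $(f_i')^2\mid_V$ to $id_V$), and Lemma \ref{1.extiso} applies as you intended. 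Note that this is exactly the point where the exactness of the relations on $N$---which your write-up treats as a mere convenience (``in particular they are isotopic'')---is essential: it is the equality on $\partial V$, not any homological data of $V$, that controls the twist ambiguity and makes the $V$-check ``automatic'' in the paper's sense.
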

\begin{proof}
By the final part of Section \ref{s.home}, we know that $f_i$ ($i=0,1,2,3$) is isotopic
to $f_i'$ on $M_k$. Therefore we only need to prove the lemma by replacing $f_i$ with $f_i'$. Since $f_i'$ is an extension of $h_i$ on $N$, due to the proof of Proposition  \ref{p.hZ4}, we know that 
$(f_i')^2\mid_N =f_0'\mid_N=Id\mid_N$ ($i=1,2,3$) and $f_i'f_j' =f_j'f_i' =f_k'$ where $(i,j,k)$ is any permutation of $(1,2,3)$. Now we can use  Lemma \ref{1.extiso} to complete the proof. For simplicity, we will only show it for the case that $(f_i')^2$ is isotopic to $f_0'=id$, and the left cases can be similarly proved. 
Since $(f_i')^2\mid_N =Id\mid_N$,  one can automatically check that
$(f_i')^2\mid_V$ is isotopic to $f_0'\mid_V=id_V$. Then due to  Lemma \ref{1.extiso},
$(f_i')^2$ and $f_0' =id$ are isotopic on $M_k$.
\end{proof}

Now we are prepared enough to prove the main theorem.
\begin{proof}[Proof of Theorem \ref{t.main}]
By Lemma \ref{l.4}, similar to the proof of Lemma \ref{l.gZ4}, we can build a surjective
homomorphism $\varphi: \ZZ_2 \oplus \ZZ_2 \to \cF$ by $\varphi((0,0))=[f_0]$, 
$\varphi((1,0))=[f_1]$, $\varphi((1,1))=[f_2]$ and $\varphi((0,1))=[f_3]$ where $\cF=\{[f_0], [f_1], [f_2], [f_3]\}$. Further by Lemma \ref{l.3}, we know that $\varphi$
is also injective. Therefore $\varphi: \ZZ_2 \oplus \ZZ_2 \to \cF$ is an isomorphism.
Furthermore, by Lemma \ref{l.1}, each self homeomorphism $f$ on $M_k$ is 
isotopic to some $f_i$ ($i=0,1,2,3$). Then  following the above discussions, one can easily get that $\mbox{Mod}(M_k)=\cF\cong \ZZ_2\oplus \ZZ_2$.

By Lemma \ref{l.2}, in each isotopy class of  $f_i$ ($i=0,1,2,3$), there exists a self orbit equivalence of $X_t$ ($f_i$ if $i=0,2$ and  $F_i=\eta f_i$ if $i=1,3$) in this class. This means that every element in $\mbox{Mod}(M_k)$ can be represented by a self orbit equivalence of $X_t$.  The proof of Theorem \ref{t.main} is complete.
\end{proof}

\vskip 1cm

\noindent Bin Yu

\noindent {\small School of Mathematical Sciences}

\noindent {\small Key Laboratory of Intelligent Computing and
Applications (Tongji University), Ministry of Education}

\noindent{\small Tongji University, Shanghai 200092, CHINA}

\noindent{\footnotesize{E-mail: binyu1980@gmail.com }}

\vskip 2mm

\end{document}